\documentclass{amsart}

\usepackage[english]{babel}
\usepackage[utf8]{inputenc}
\usepackage{amsmath}
\usepackage{amsfonts}
\usepackage{amsthm}
\usepackage{amsrefs}
\usepackage{amssymb}
\usepackage{graphicx}
\usepackage{float}
\usepackage{mathrsfs}
\usepackage{calrsfs}
\usepackage{mathtools}
\usepackage{tabularx,array}
\usepackage{hyperref}

\theoremstyle{definition}
\newtheorem{definition}{Definition}[section]
\theoremstyle{plain}
\newtheorem{theorem}[definition]{Theorem}
\newtheorem{proposition}[definition]{Proposition}
\newtheorem{corollary}[definition]{Corollary}
\newtheorem{lemma}[definition]{Lemma}
\theoremstyle{remark}
\newtheorem{remark}[definition]{Remark}

\newcommand\xqed[1]{%
  \leavevmode\unskip\penalty9999 \hbox{}\nobreak\hfill
  \quad\hbox{#1}}
\newcommand\qet{\xqed{$\diamondsuit$}}

\newcommand{\Z}{\mathbb{Z}}
\newcommand{\R}{\mathbb{R}}
\newcommand{\G}{\mathbb{G}}
\newcommand{\GZ}{\G_{\Z}}

\newcommand{\mc}[1]{{\mathcal{#1}}}
\newcommand{\mr}[1]{{\mathrm{#1}}}

\DeclareMathOperator{\SCI}{SCI}
\DeclareMathOperator{\sgn}{sgn}
\DeclareMathOperator{\ind}{ind}
\DeclareMathOperator{\writhe}{w}
\DeclareMathOperator{\HN}{HN}
\DeclareMathOperator{\lk}{lk}

\DeclareMathOperator{\weight}{\omega}

\DeclareMathOperator{\Jp}{J^+}
\DeclareMathOperator{\Jm}{J^{--}}
\DeclareMathOperator{\St}{St}

\author{Piotr Suwara}
\address{Department of Mathematics, Massachusetts Institute of Technology, Cambridge, MA 02139}
\email{suwara@mit.edu}
\author{Albert Yue}
\address{Phillips Academy, Andover, MA 01810}
\email{albert.s.yue@gmail.com}
\title{An Index-Type Invariant of Knot Diagrams Giving Bounds for Unknotting Framed Unknots}
\date{\today}

\begin{document}

\maketitle

\begin{abstract}
    We introduce a~new knot diagram invariant called 
    the \emph{Self-Crossing Index} ($\SCI$).
    Using $\SCI$, we provide bounds for unknotting two families of framed unknots.
    For one of these families,
    unknotting using framed Reidemeister moves 
    is significantly harder than unknotting using regular Reidemeister moves.
    
    We also investigate the relation between $\SCI$ and Arnold's curve invariant $\St$,
    as well as the relation with Hass and Nowik's invariant,
    which generalizes cowrithe.
    In particular, the change of $\SCI$ under $\Omega3$ moves 
    depends only on the forward/backward character of the move,
    similar to how the change of $\St$ or cowrithe
    depends only on the positive/negative quality of the move.
\end{abstract}

\tableofcontents

{\small 
    {\bf Keywords:} 
    knot diagrams; diagram invariants; unknotting unknots; framed knots; plane curves.
}

\pagebreak

\section{Introduction}

Knots in $\R^3$ can be represented using planar diagrams
via taking a~generic projection onto a~plane $\R^2$
and marking each crossing with the information
about which strand is an~\emph{overcrossing} and which is an~\emph{undercrossing}.
This presentation is not unique, 
and two diagrams are equivalent if and only if they are connected
by a~sequence of Reidemeister moves of type $\Omega1, \Omega2$ and $\Omega3$
(see Figures \ref{fig:R1-cases}, \ref{fig:R2-cases}, \ref{fig:R3-cases}).

\begin{figure}[ht]
    \centering
    \includegraphics[width=0.8\textwidth]{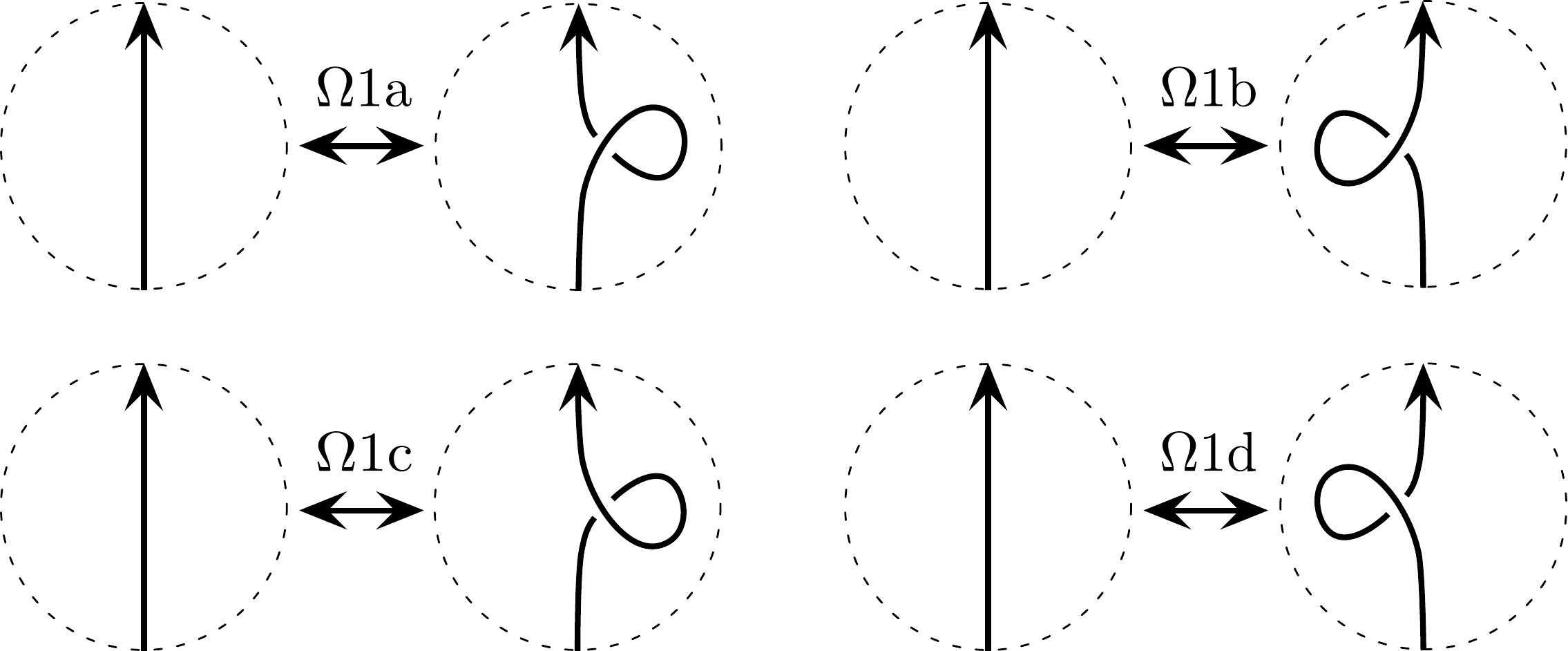}
    \caption{Right (a, c) and left (b, d) oriented Reidemeister moves of type I.}
    \label{fig:R1-cases}
\end{figure}

\begin{figure}[ht]
    \centering
    \includegraphics[width=0.8\textwidth]{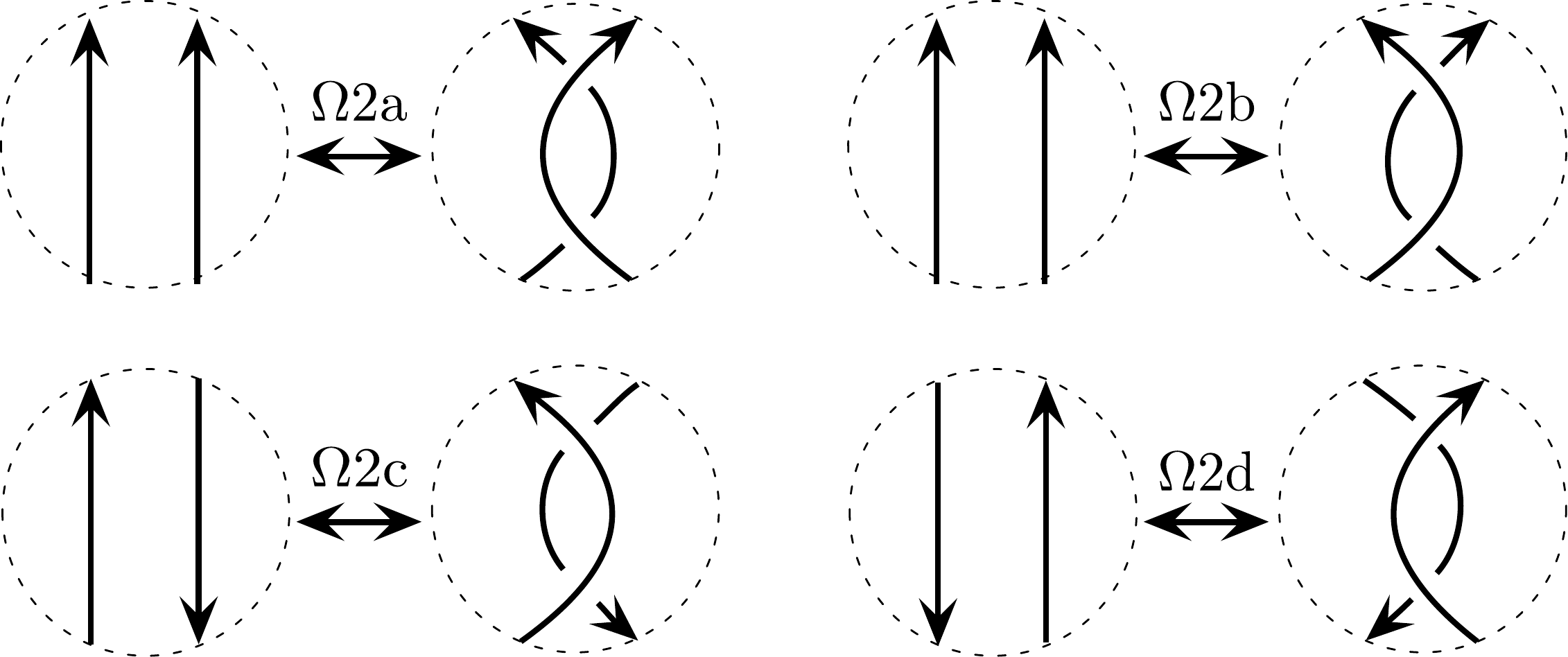}
    \caption{(a-b) Matched ($\Omega2m$) and (c-d) unmatched ($\Omega2u$) oriented Reidemeister moves of type II.}
    \label{fig:R2-cases}
\end{figure}

\begin{figure}[ht]
    \centering
    \includegraphics[width=0.8\textwidth]{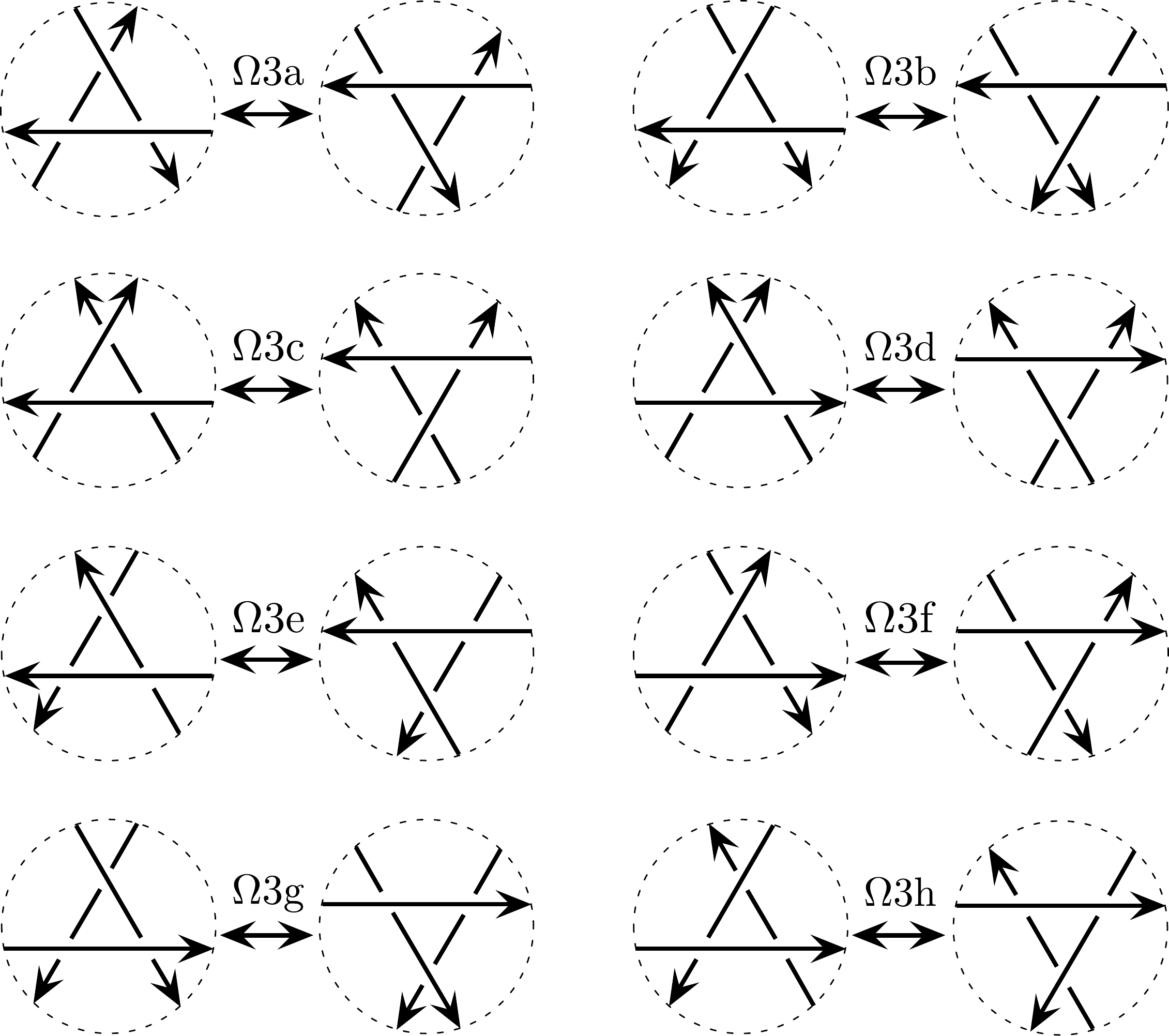}
    \caption{Oriented Reidemeister moves of type III.}
    \label{fig:R3-cases}
\end{figure}


An important problem in knot theory is the problem
of recognizing the unknot.
One way to approach it is through finding upper or lower bounds
for the length of a~minimal sequence of moves
required to untangle a~unknot diagram.
Recently, Lackenby \cite{L15} proved a~polynomial upper bound of $(236c)^{11}$
for unknotting, where $c$ is the number of crossings of a~diagram.
On the other hand, Hass and Nowik presented in \cite{HN10}
a~family of diagrams requiring $c^2/25$ moves to unknot,
using a~diagram invariant introduced in \cite{HN08}.
Another family of unknots with quadratic lower bound for unknotting
has been constructed by Hayashi, Hayashi, Sawada and Yamada \cite{HHSY}
using curve invariants defined by Arnold \cite{Arn94}.

In this paper, we construct a~new knot diagram invariant called $\SCI$
and prove that it provides bounds for unknotting \emph{framed knots}.
As with usual knots, two knot diagrams represent the same framed knot
if and only if they are connected by a~sequence of 
\emph{framed Reidemeister moves},
which include usual $\Omega2$ and $\Omega3$ moves,
but a~different kind of $\Omega1$ moves, which we call $\Omega1\mr{F}$
(see Figure \ref{fig:framed-Reidemeister-moves}).
\begin{figure}[ht]
    \centering
    \includegraphics[width=0.4\textwidth]{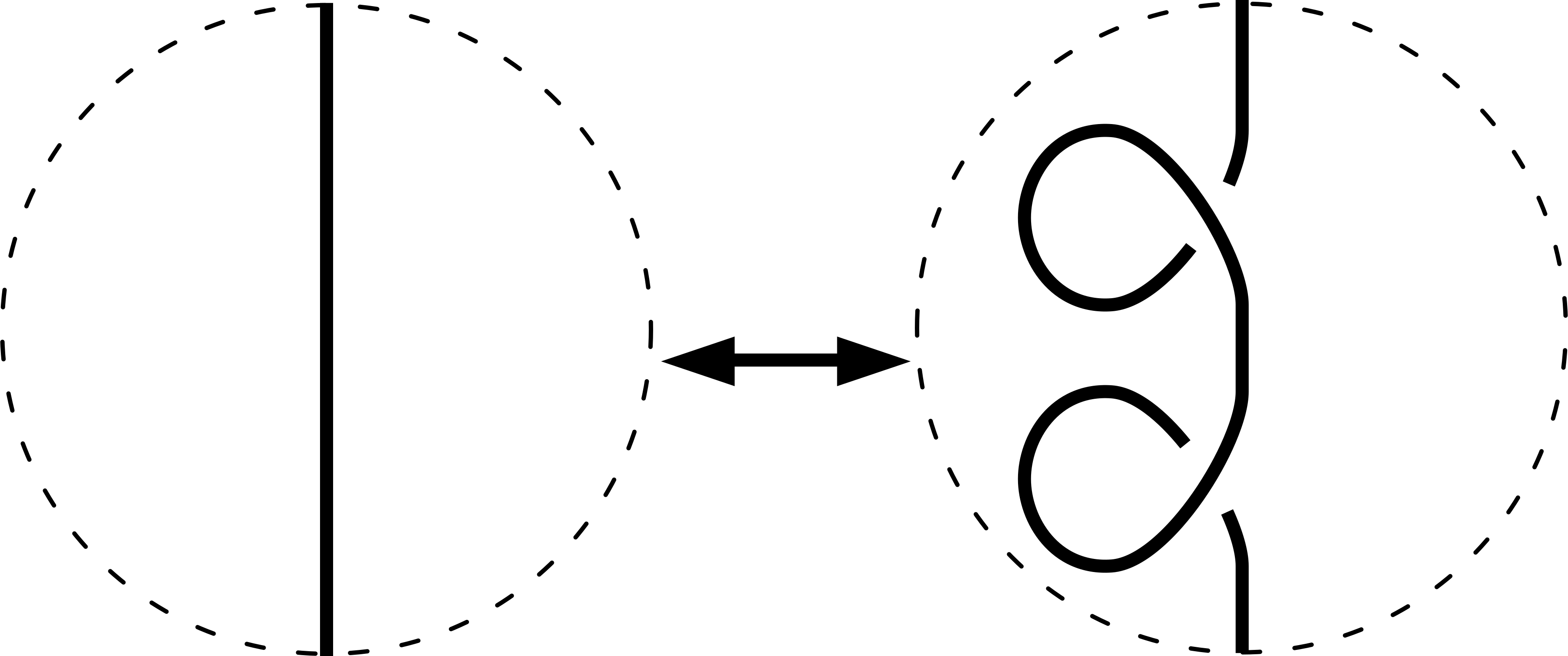}\\
    \quad \\
    \includegraphics[width=0.4\textwidth]{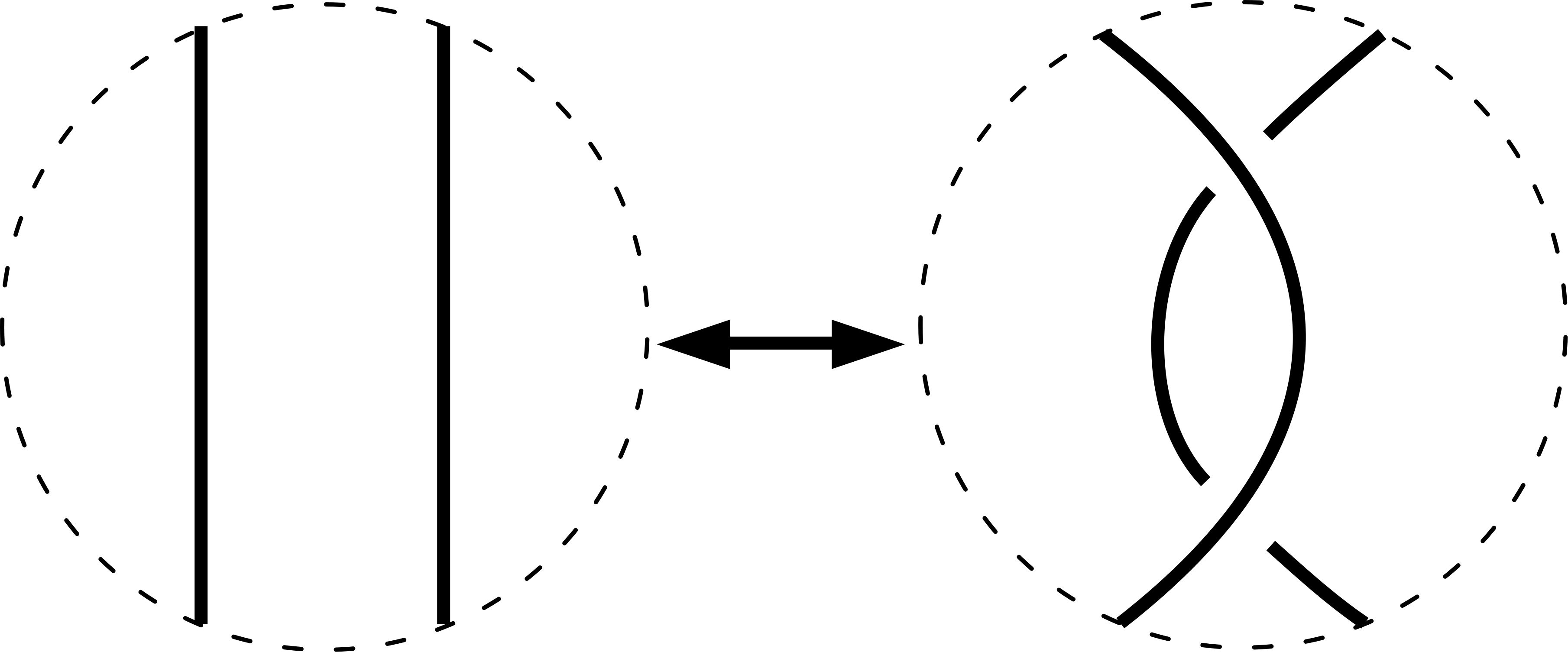}\\
    \quad \\
    \includegraphics[width=0.9\textwidth]{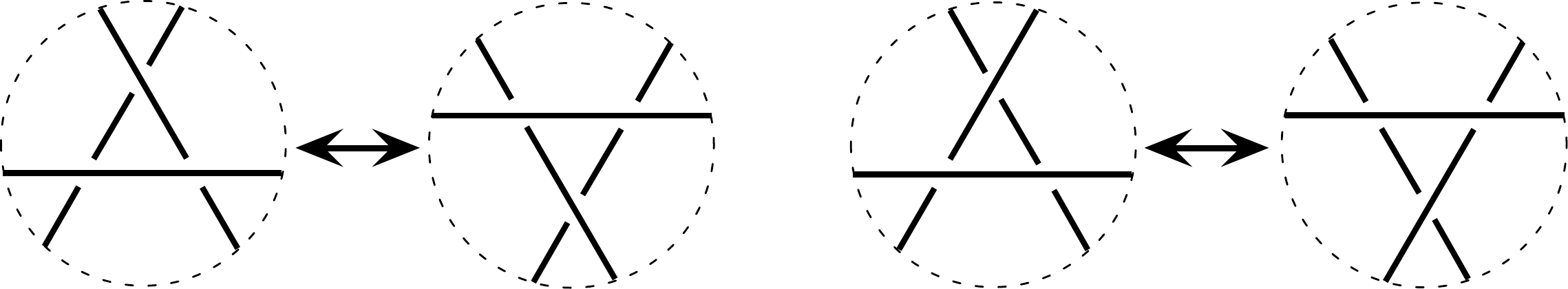}
    \caption{Unoriented Reidemeister moves of type $\Omega1\mr{F}, \Omega2$ and $\Omega3$ (top to bottom).}
    \label{fig:framed-Reidemeister-moves}
\end{figure}

The invariant $\SCI$ arises naturally as a~version
of invariants $\mathrm{CI}$ and $\mathrm{OCI}$ defined in \cite{S17},
which were used there to distinguish \emph{forward} and \emph{backward}
Reidemeister moves of type $\Omega3$.
\emph{Forward} moves of type $\Omega1$, $\Omega2$ and $\Omega3$
are presented in Figures \ref{fig:R1-cases}, \ref{fig:R2-cases} and \ref{fig:R3-cases}
as going from the diagram to the left to the diagram to the right.
As we will see, $\SCI$ distinguishes these, too.
The use of index closely resembles the technique used
by Vassiliev to define invariants of ornaments, 
i.e. sets of curves in a~plane \cite{V94}. 
Moreover,
Shumakovich \cite{Sh95} presented 
index-type formulas for Arnold's curve invariant $\St$,
while Viro \cite{Vi94} proved
formulas for Arnold's curve invariants $\Jp$ and $\Jm$.
The definition of $\SCI$ closely resembles a~formula for $\St$
given by Shumakovich, and $\SCI$ behaves in a~similar manner
under Reidemeister moves.

Our main result is the following: 
\begin{theorem} \label{thm:SCI-quadratic-bound}
    To unknot the family of diagrams $D_n$ (Figure \ref{fig:quadratic-family})
    as framed unknots one needs to use at least 
    $\frac{1}{2} \left( 3n^2 - n + 2 \right)$ moves of type $\Omega3$.
\end{theorem}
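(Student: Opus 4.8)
The plan is to treat $\SCI$ as a monovariant: since only $\Omega3$ moves can change it appreciably, a large value of $\SCI(D_n)$ forces many such moves in any unknotting sequence. Concretely, I would combine a per-move change estimate with an explicit evaluation of $\SCI$ on the family $D_n$.

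First I would record the behavior of $\SCI$ under the three framed Reidemeister moves, which is established in the preceding sections. The two facts needed are that $\Omega1\mr{F}$ and $\Omega2$ moves leave $\SCI$ unchanged, and that a single $\Omega3$ move changes $\SCI$ by at most a fixed constant $c$ in absolute value (by the results quoted in the introduction, the change under $\Omega3$ is determined solely by the forward/backward character of the move, so $c$ is this universal bound). Consequently, if $D_n$ is carried to a standard trivial diagram $U$ of the framed unknot by a sequence of framed Reidemeister moves containing $N$ moves of type $\Omega3$, then $|\SCI(D_n) - \SCI(U)| \le cN$, since the non-$\Omega3$ moves contribute nothing to the total variation of $\SCI$.

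Next comes the computational core: evaluating $\SCI(D_n)$ directly from its definition. Following the index-type formula for $\SCI$, I would fix an orientation of $D_n$, enumerate its self-crossings, and for each crossing compute the index (the winding-number datum entering the definition) together with its sign, then sum the weighted contributions. The family in Figure \ref{fig:quadratic-family} is designed so that these contributions reinforce rather than cancel, producing a quadratic total; carrying out the bookkeeping should give $\SCI(D_n)$ equal, up to the overall normalization fixed by the definition, to $3n^2 - n + 2$, while the trivial diagram has $\SCI(U) = 0$.

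Substituting into the per-move inequality then yields $N \ge |\SCI(D_n) - \SCI(U)|/c = \tfrac12(3n^2 - n + 2)$, with the normalization and the constant $c$ together producing the factor $\tfrac12$. I expect the main obstacle to be the exact evaluation of $\SCI(D_n)$: one must set up a single consistent orientation and indexing valid for all $n$, understand how the index at each of the roughly $n^2$ crossings scales with $n$, and handle the sign conventions carefully so that the crossing contributions accumulate coherently instead of partially cancelling. By contrast, the invariance under $\Omega1\mr{F}$ and $\Omega2$ and the bounded change under $\Omega3$ are already in hand from the earlier sections, so the remaining work is essentially the combinatorial count on this specific family.
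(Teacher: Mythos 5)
Your proposal follows essentially the same route as the paper: Theorem \ref{thm:SCI-changes} makes $\SCI$ a monovariant that only $\Omega3$ moves can alter (by exactly $\pm 1$, so your constant $c$ equals $1$ and the factor $\tfrac12$ resides entirely in the computed value $\SCI(D_n)=\tfrac12\left(3n^2-n+2\right)$, the trivial diagram having $\SCI=0$), which is precisely the paper's Corollary \ref{cor:SCI-bounds} combined with the direct evaluation of $\SCI(D_n)$. The only looseness is your hedge that the normalization and $c$ ``together'' produce the $\tfrac12$; otherwise the argument matches the paper's proof step for step.
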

Since $\SCI$ is easy to compute, the above follows easily
if we understand how $\SCI$ changes under framed Reidemeister moves.
The following theorem is the main tool to obtain
bounds using $\SCI$.
\begin{theorem} \label{thm:SCI-changes}
    $\SCI$ increases by $1$ under forward $\Omega3$ moves
    and does not change under $\Omega1\mr{F}$ moves or $\Omega2$ moves.
\end{theorem}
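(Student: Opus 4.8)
The plan is to use the definition of $\SCI$ together with one structural observation about how the moves act on the underlying plane curve. Following the analogy the authors draw with Shumakovich's index formula for $\St$, I expect $\SCI$ to take the shape of a signed sum $\sum_{c} \varepsilon(c)\,\ind(c)$ over the self-crossings $c$ of the diagram, where $\ind(c)$ is a winding-number type index read off from the underlying curve and $\varepsilon(c)$ is a sign; my strategy uses only this shape plus a localization principle. The principle, which I would isolate as a lemma, is that each Reidemeister move changes the underlying curve only inside a disk $D$, with the two curves agreeing outside $D$ and meeting $\partial D$ at the same points with the same connections. Hence for any point $p \notin D$ the winding number of the curve about $p$ is the same before and after, because the before/after difference is a closed loop supported in the simply connected set $D$, which does not contain $p$. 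It follows that $\ind(c)$ is unchanged for every self-crossing $c$ lying outside $D$, so in each case I only have to track the finitely many crossings inside $D$.

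With this in hand, the $\Omega2$ move is the simplest case. Such a move creates or deletes exactly two crossings $c_1, c_2$, and since the writhe is preserved they carry opposite signs, $\varepsilon(c_1) = -\varepsilon(c_2)$. The two crossings sit at opposite ends of a thin bigon, so the loops used to read off $\ind(c_1)$ and $\ind(c_2)$ differ only by the short arc bounding the bigon; their indices therefore agree, and the contributions $\varepsilon(c_1)\ind(c_1)$ and $\varepsilon(c_2)\ind(c_2)$ cancel, leaving $\SCI$ unchanged. I would treat $\Omega1\mr{F}$ in the same spirit: enumerate the crossings appearing in the framed first move of Figure \ref{fig:framed-Reidemeister-moves} and check directly that their signed index contributions sum to zero. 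This is precisely the point at which the framed variant differs from the ordinary $\Omega1$ move, whose single kink would change $\SCI$ — the framed configuration is arranged so that no net change occurs.

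The $\Omega3$ move is the crux and the main obstacle. Here no crossing is created or destroyed, the three crossings inside the triangle keep their signs, and by the localization lemma every crossing outside the triangle is untouched. The whole change in $\SCI$ is therefore the signed sum of the three index shifts $\ind_{\mathrm{after}}(c) - \ind_{\mathrm{before}}(c)$ over the participating crossings, a purely local quantity. The work is to compute these three shifts and verify that their signed total equals $+1$ for every forward $\Omega3$ move. The difficulty is combinatorial: an oriented $\Omega3$ move comes in several types according to the orientations of the three strands and the over/under pattern, and I must confirm that the total depends only on the forward/backward character of the move and not on the type. I would organize this by reducing the oriented cases to a few model pictures, computing the index shift at each of the three crossings in every model, and summing; establishing that the answer is uniformly $+1$ — the feature mirroring the behavior of $\St$ and cowrithe under positive/negative $\Omega3$ moves — is where the real effort lies.
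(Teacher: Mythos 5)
Your proposal is correct and follows essentially the same route as the paper: for $\Omega1\mr{F}$ and $\Omega2$ moves, the two created/removed crossings have opposite signs and equal index, so their contributions cancel; for $\Omega3$ moves, the signs are unchanged and one checks case by case that the signed sum of the local index shifts is $+1$ (the paper carries out the $\Omega3a$ case, where all three indices increase by $1$ and the signs are $+1,+1,-1$, and notes the remaining seven cases are analogous). The only difference is one of emphasis — you make the localization principle an explicit lemma, while the paper leaves it implicit — and, like the paper, the final uniform case-check is the routine remaining work.
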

Of course, unknotting a~framed unknot is not easier than unknotting
the same unknot using regular Reidemeister moves.
We show, using $\SCI$, that unknotting a framed unknot can be essentially harder:

\begin{theorem} \label{thm:framed-v-normal-Rmoves-comparison}
    The family of framed unknot diagrams $L_n$ is unknotted in $\Theta(n)$ moves
    using regular Reidemeister moves
    and in $\Theta(n^2)$ moves using framed Reidemeister moves.
\end{theorem}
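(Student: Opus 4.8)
The statement packages four estimates: matching upper and lower bounds of order $n$ for regular Reidemeister moves, and of order $n^2$ for framed Reidemeister moves. The plan is to obtain three of these four bounds by elementary means and to reserve the invariant $\SCI$ for the single bound that genuinely requires it, namely the quadratic lower bound in the framed setting. Throughout I would first record the structural features of $L_n$ on which the rest depends: that it has $\Theta(n)$ crossings, and that its self-crossing index grows quadratically while the standard framed diagram for its framing stays cheap in $\SCI$.

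The heart of the argument is the framed lower bound. First I would compute $\SCI(L_n)$ and show $|\SCI(L_n)| = \Theta(n^2)$; the natural design of $L_n$ is a nest of loops of indices $1,2,\dots,n$, so that the index-weighted count of self-crossings telescopes to $\sum_{k=1}^{n} k = \Theta(n^2)$. By Theorem \ref{thm:SCI-changes}, along any sequence of framed Reidemeister moves the quantity $\SCI$ is unchanged by $\Omega1\mr{F}$ and $\Omega2$ moves and changes by exactly $\pm1$ at each $\Omega3$ move. Hence, if such a sequence uses $N_3$ moves of type $\Omega3$ and terminates at the standard framed diagram $T$ for the framing of $L_n$, then
\[
N_3 \;\ge\; \bigl|\SCI(L_n) - \SCI(T)\bigr|.
\]
Since $T$ is a round circle carrying at most $\Theta(n)$ curls, each contributing a bounded amount, $\SCI(T) = O(n)$, which is dominated by $\SCI(L_n) = \Theta(n^2)$. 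Thus $N_3$, and a fortiori the total framed unknotting length, is at least of order $n^2$.

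The remaining three bounds are constructive or combinatorial. For the regular setting I would exhibit an explicit unknotting sequence: because ordinary $\Omega1$ moves are permitted, the outermost loop of $L_n$ may be deleted by a single $\Omega1$ move, after which the next loop becomes removable, and so on, peeling the whole diagram in $O(n)$ moves (with at most a bounded number of $\Omega2$ moves to disengage strands). The matching lower bound is immediate from crossing count: each Reidemeister move alters the number of crossings by at most two, so clearing the $\Theta(n)$ crossings of $L_n$ requires at least of order $n$ moves, giving $\Theta(n)$ in the regular setting. For the framed upper bound I would describe a procedure that unwinds the nested loops using $\Omega3$ moves — the obstruction to the cheap $\Omega1$ peeling being precisely that $\Omega1\mr{F}$ preserves the framing and cannot delete a lone kink — and verify that it spends $O(n)$ moves per loop across $O(n)$ loops, hence $O(n^2)$ in all; together with the lower bound this yields $\Theta(n^2)$.

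The main obstacle is the exact computation $\SCI(L_n) = \Theta(n^2)$, since every downstream estimate rests on it; this amounts to pinning down the index of each self-crossing of $L_n$ directly from the definition of $\SCI$ and summing. A secondary technical point is producing the explicit $O(n^2)$ framed unknotting sequence and certifying its length, whereas the $O(n)$ regular construction and the crossing-count lower bound of order $n$ are routine.
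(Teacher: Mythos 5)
Your proposal is correct and follows essentially the same route as the paper: the quadratic lower bound in the framed setting comes from computing $\SCI(L_n) = \Theta(n^2)$ (the paper gets exactly $n(n+1)/2$) combined with Theorem \ref{thm:SCI-changes}, the linear lower bound comes from the crossing count, and both upper bounds come from explicit peeling (regular) and loop-pushing (framed, $O(n)$ moves per loop over $n$ loops) procedures. The only difference is cosmetic: you allow the terminal framed diagram to carry $O(n)$ unnested curls with $\SCI = O(n)$, while the paper implicitly uses that $L_n$ has writhe zero, so the target is the round circle and $\SCI$ of the target vanishes (Corollary \ref{cor:SCI-bounds}); your version is marginally more robust but not a different argument.
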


The article is organized as follows.
In Section 2 we describe Arnold's invariants,
including Shumakovich's \cite{Sh95} and Viro's \cite{Vi94}
index-type formulas for the invariants.
In particular, we introduce indices of crossings
that will be used to define $\SCI$.
The main part of the article is Section 3, 
where we define the invariant $\SCI$.
We prove its additivity under connected sum and 
that it is a~Vassiliev diagram invariant of order $1$.
Then we prove Theorems 
\ref{thm:SCI-quadratic-bound},
\ref{thm:SCI-changes}
and 
\ref{thm:framed-v-normal-Rmoves-comparison}.
Finally, in Section 4,
we compare some of the properties of $\SCI$ 
to properties of the Hass-Nowik's invariant \cite{HN08} 
(denoted by $I_{\lk}$ in their paper).
In particular, 
we explain the relation between different types of $\Omega3$ moves:
positive/negative as defined by Arnold for curves \cite{Arn94},
ascending/descending as defined by \"Ostlund \cite{O01},
and forward/backward as defined by one of the authors in \cite{S17}.
The Appendix summarizes how known diagram invariants
change under different types of Reidemeister moves.

The authors want to thank the organizers of the MIT PRIMES program, 
especially Director Dr.~Slava Gerovitch,
Head Mentor Dr.~Tanya Khovanova,
and Chief Research Advisor Prof.~Pavel Etingof.
We are also grateful to Prof.~Maciej Borodzik,
who provided an important impulse for the research.

The project was supported by the Program for Research in Mathematics,
Engineering, and Science for High School Students (PRIMES) at MIT.

\section{Index-type description of Arnold's curve invariants}

In this section, we recall the definition of Arnold's curve invariants and state 
Shumakovich's \cite{Sh95} and Viro's \cite{Vi94} theorems describing these in terms of indices.
The definitions of indices will prove useful in the definition of the Self-Crossing Index, 
which is similar to the index-type description of the $\St$ curve invariant.

\subsection{Arnold's invariants}
When mentioning Reidemeister moves on curves we consider
the moves obtained from regular Reidemeister moves
by forgetting the information about over- and undercrossings.
The distinction between $\Omega1$, matched $\Omega2$, 
unmatched $\Omega2$, and $\Omega3$ moves 
carries over to the case of curves, 
as well as the notions of left and right $\Omega1$ moves for oriented curves
(see Figures \ref{fig:R1-cases}, \ref{fig:R2-cases} and \ref{fig:R3-cases};
while we need to choose an orientation to distinguish
between matched and unmatched $\Omega2$ moves,
the matched/unmatched type
does not depend on the orientation chosen).

By \emph{positive} (or \emph{forward}) moves of type $\Omega1$ or $\Omega2$ 
we define moves that create new crossings; 
their converses are called \emph{negative} (or \emph{backward}).
In order to define Arnold's invariant we also 
need to define what \emph{positive} and \emph{negative}
moves of type $\Omega3$ are.

\begin{definition}[vanishing triangle]
	The \emph{vanishing triangle} 
    of a~$\Omega3$ move is the triangle formed by the three edges
    contained in the diagram of a~$\Omega3$ move (see \ref{fig:R3-cases})
    which ends are the three crossings involved in a~$\Omega3$ move.
\end{definition}

\begin{definition} [positive and negative $\Omega3$ move]
    \label{def:R3-positive}
    Consider an $\Omega3$ move performed on a~closed oriented curve $C$.
    Consider the vanishing triangle of this move.
    Assign an orientation to the vanishing triangle 
    corresponding to the order in which its sides appear
    if we move along $C$ beginning at an arbitrary point. 
    Let $n$ be the number of sides of the vanishing triangle whose orientation agrees
    with the orientation of the triangle and let $q = (-1)^n$. 
    Then a $\Omega3$ move is considered positive if it changes $q$ from $-1$ to $+1$ 
    and negative if the reverse occurs.
    \qet
\end{definition}

\begin{remark}
    The definitions of positive and negative moves carry over
    to regular Reidemeister moves (i.e. on knot/link diagrams).
    For moves of type $\Omega1$ and $\Omega2$
    positive (resp. negative) moves are the same as 
    forward (resp. backward) moves.
    For moves of type $\Omega3$, these notions are different,
    and the relationship between these is clarified in Subsection \ref{subs:type-3-moves}.
\end{remark}

\begin{definition} [Arnold \cite{Arn94}] \label{def:arnold-invariants}
    The Arnold invariants $\Jp$, $\Jm$, and $\St$ are defined by the following rules:\vspace{-5pt}
    \begin{enumerate}
        \item Orientation of the curve does not affect the invariants. \vspace{-5pt}
        \item $\Jp$ changes by $+2$ under positive matched $\Omega2$ moves, 
            and remains unchanged under unmatched $\Omega2$ moves and $\Omega3$ moves. \vspace{-5pt}
        \item $\Jm$ changes by $+2$ under positive unmatched $\Omega2$ moves, 
            and remains unchanged under matched $\Omega2$ moves and $\Omega3$ moves. \vspace{-5pt}
        \item $\St$ changes by $+1$ under positive $\Omega3$ moves, 
            and remains unchanged under $\Omega2$ moves. \vspace{-5pt}
        \item For curves $K_0$ and $K_i$, for $i \in \mathbb{N}_0$ (see Figure \ref{fig:arnold-bases}), 
            \vspace{-5pt}
            \begin{enumerate}
                \item $\Jp(K_{i+1}) = -2i$, $\Jp(K_0) = 0$; 
                \item $\Jm(K_{i+1}) = -3i$, $\Jm(K_0) = -1$; 
                \item $\St(K_{i+1}) = i$, $\St(K_0) = 0$.
                    \qet
            \end{enumerate}
    \end{enumerate}
    \begin{figure}[ht]
        \centering
        \includegraphics[scale=0.3]{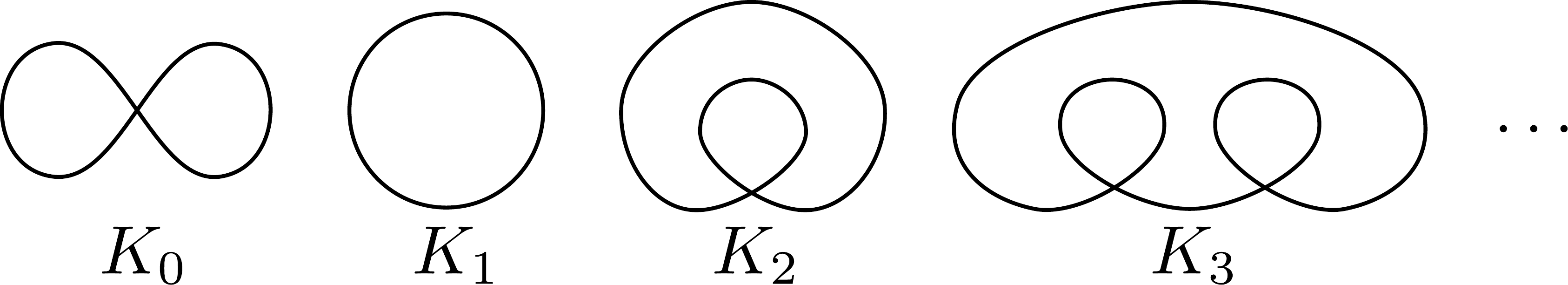}
        \caption{The base cases $K_i$ for which the Arnold invariants are defined.}
        \label{fig:arnold-bases}
    \end{figure}
\end{definition}

Arnold proved that such invariants exists, and their uniqueness follows from the fact
that any curve may be obtained from one of the $K_i$'s using $\Omega2$ and $\Omega3$ moves.

Note that these also can be used to obtain bounds for unknotting,
as did Hayashi, Hayashi, Sawada and Yamada \cite{HHSY}.

\subsection{Indices with respect to a~curve}
We now proceed to define indices of points in the plane with respect to a~given curve $C$.

\begin{definition}[the index of a point with respect to a curve]
    Let $\gamma:S^1 \to \R^2$ represent an~oriented curve $C$, 
    and let $p \in \R^2 \setminus \gamma(S^1)$.
    Then we define \emph{the index of $p$ with respect to $C$},
    denoted as $\ind_C(p)$, 
    to be the degree of the map $\tilde \gamma_p:S^1 \to S^1$
    defined by
    \[\tilde \gamma_p(t) = \frac{\gamma(t) - p}{\| \gamma(t) - p \|}. \]

    We will drop the subscript $C$ from notation and write
    $\ind(p)$ whenever it causes no confusion.
    \qet
\end{definition}

Since the index with respect to $C$ is equal for all points in a~connected component
of the complement of $C$, we can define the following:
\begin{definition}[indices of regions, edges and crossings]
    Let $C$ be an~oriented curve.

    Let $r$ be a~region of $\R^2$, i.e. a~connected component
    of $\R^2 \setminus C$.
    Then we define the \emph{index of the region $r$ with respect to $C$} to be
    \[ \ind_C(r) = \ind_C(p) \]
    for any $p \in r$.
    We denote the set of all regions by $\mc{R}(C)$.

    Denote by $\mc{C}(C)$ the set of all crossings of $C$,
    and by $\mc{E}(C)$ the set of all edges of $C$,
    i.e. connected components of $C \setminus \mc{C}(C)$.
    Let $e$ be an~edge of $C$ and define its \emph{index with respect to $C$} to be
    \[ \ind_C(e) = \frac 1 2  \sum_{r \in \mc{R}(e)} \ind_C(r) \]
    where $\mc{R}(e) \subset \mc{R}(C)$ is the set of two regions adjacent to $e$.

    Let $c$ be a~crossing of $C$, define its \emph{index with respect to $C$} to be
    \[ \ind_C(c) = \frac 1 4 \sum_{r \in \mc{R}(c)} \ind_C(r) \]
    where $\mc{R}(c) \subset \mc{R}(C)$ is the set of four regions adjacent to $c$
    (counted with multiplicity).
    \qet
\end{definition}

\subsection{Viro's formulas for $\Jp$ and $\Jm$}
To introduce Viro's formulas for $\Jp$ and $\Jm$
we recall the definition of smoothing of a~crossing:
\begin{definition}
    \label{def:smoothing}
    Let $c$ be a crossing of an~oriented curve $C$. 
    Then the \emph{smoothing} of $C$ 
    consists of two (potentially intersecting) curves 
    created by removing the crossing $c$ and replacing it with two non-intersecting strands 
    that preserve the original orientation 
    (cf. with Figure \ref{fig:smoothing}).
    \qet
\end{definition} 
\begin{figure}[ht] 
    \centering
    \includegraphics[width=0.7\textwidth]{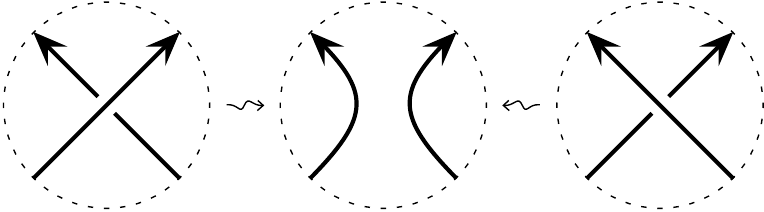}
    \caption{Smoothing a~crossing of a~knot
        is just smoothing the crossing of the underlying curve
        and forgetting about the crossing information.}
    \label{fig:smoothing}
\end{figure}

Note that if $C$ is not oriented,
taking any of the two possible orientations gives the same smoothing.
The definition above goes through for crossings between oriented curves
as well as for crossings of oriented link diagrams.

Now we are ready to state
\begin{theorem}[Viro \cite{Vi94}] \label{thm:explicit-Jpm}
    Let $C$ be a curve with $n$ double points. 
    Let $\bar{C}$ be the diagram obtained by smoothing all crossings of $C$, 
    and let $\mathcal{R}(\bar{C})$ be the set of regions in the complement of $\bar{C}$. 
    Then
    \[ \Jp(C) = 1 + n - \sum_{r \in \mathcal{R}(\bar{C})} \left( \chi(r) \ind^2(r) \right) ,\]
    \[ \Jm(C) = 1 - \sum_{r \in \mathcal{R}(\bar{C})} \left( \chi(r) \ind^2(r) \right) ,\]
    where $\chi$ is the Euler characteristic and $n$ the number of crossings of $C$.
\end{theorem}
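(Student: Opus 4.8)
The plan is to verify that the two right-hand sides satisfy the defining relations of $\Jp$ and $\Jm$ from Definition \ref{def:arnold-invariants} and then invoke the uniqueness asserted right after that definition (every curve is reachable from some $K_i$ by $\Omega2$ and $\Omega3$ moves). Write $F^{+}(C) = 1 + n - S(C)$ and $F^{-}(C) = 1 - S(C)$, where $S(C) := \sum_{r \in \mc{R}(\bar C)} \chi(r)\,\ind^2(r)$. A preliminary observation makes everything well-posed: the oriented smoothing preserves the homology class of the curve in the complement of any point off the diagram, so winding numbers are unchanged and $\ind_C(r) = \ind_{\bar C}(r)$ on the common complement; moreover $\bar C$ is an embedded oriented $1$-manifold, i.e.\ a disjoint union of oriented simple closed curves, so the regions $\mc{R}(\bar C)$ and their indices are governed entirely by the nesting of these circles. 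Orientation-independence (rule 1) is then immediate, since reversing the orientation of $C$ negates each $\ind(r)$ but leaves $\ind^2(r)$, $\chi(r)$, and $n$ untouched. It is also worth noting that $F^{+} - F^{-} = n$, a known identity for Arnold's invariants, which serves as a running consistency check.

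The bulk of the argument is to track how $S$ and $n$ change under the elementary moves, which I would do through their effect on $\bar C$. Under an $\Omega3$ move the count $n$ is unchanged, and the oriented resolution of the three crossings is locally just parallel strands (each crossing resolves to the identity tangle), so $\bar C$ is preserved up to planar isotopy; hence $S$ and $n$ are both unchanged and the $\Omega3$-invariance demanded inside rules 2 and 3 holds. For $\Omega2$ I would separate the two possibilities. A \emph{matched} move (parallel strands) smooths back to two parallel strands, leaving $\bar C$ unchanged up to isotopy, so $\Delta S = 0$ while $\Delta n = \pm2$; an \emph{unmatched} move (antiparallel strands) smooths into a turnback that pinches off or absorbs a small circle, changing both the region data and $n$. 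Feeding the resulting $\Delta n$ and $\Delta S$ into $F^{\pm}$ should reproduce exactly the prescribed jumps of rules 2 and 3.

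Finally I would evaluate $F^{\pm}$ on the base curves $K_i$ of Figure \ref{fig:arnold-bases}. Here one reads off the smoothing explicitly---an unnested pair for the figure-eight and a nested family of circles for the coiled curves---tabulates $\chi(r)$ and $\ind^2(r)$ region by region, and checks the output against rule 5. For instance the figure-eight smooths to two oppositely oriented circles giving $S = 2$ with $n = 1$, so $F^{+} = 0$ and $F^{-} = -1$, matching the stated values. Combining the move analysis with these base values and the uniqueness statement forces $F^{+} = \Jp$ and $F^{-} = \Jm$.

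I expect the main obstacle to be the $\Omega2$ computation, and specifically the orientation bookkeeping separating the matched and unmatched cases. The change $\Delta S$ is \emph{local} because smoothing confines the modification to a disk, but extracting a clean constant requires pinning down the orientation of the circle created or destroyed and the index of the region it is born into, then showing that the apparent dependence on the ambient winding number cancels so that the jump has the correct fixed size rather than something drifting with the local index. Tracking the Euler characteristics correctly as regions split or merge---in particular not mishandling the unbounded region---is the other place where care is needed, and it is also where the precise sign conventions for "positive" self-tangencies must be reconciled with the conventions fixed in Definition \ref{def:arnold-invariants}.
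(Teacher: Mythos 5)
Your overall strategy---check that the two right-hand sides satisfy the axioms of Definition \ref{def:arnold-invariants} and then invoke the uniqueness statement following it---is a legitimate route, and indeed the paper itself offers no proof of this theorem (it is quoted from Viro), so your proposal stands on its own. Your preliminary observations are correct (smoothing preserves the curve as a $1$-cycle, so $\ind_C(r)=\ind_{\bar C}(r)$; orientation independence; $F^+-F^-=n$), as are the matched $\Omega2$ analysis and the figure-eight base case. The genuine error is in the $\Omega3$ step. Your claim that ``each crossing resolves to the identity tangle, so $\bar C$ is preserved up to planar isotopy'' is true only when the three strands can be simultaneously oriented as a braid, equivalently when the vanishing triangle is \emph{not} cyclically oriented. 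For the oriented $\Omega3$ moves whose vanishing triangle \emph{is} coherently (cyclically) oriented, each crossing has one triangle edge incoming and one outgoing, so the oriented smoothing joins the three triangle edges into a small \emph{closed circle}, while the remaining six half-strands smooth into three turnback arcs. Moreover, comparing the two sides of such a move: the circle appears on both sides but with \emph{opposite} orientations (so the index of the disk it bounds, relative to the ambient region, differs), and the pairing of the six endpoints by the three arcs changes (before the move each incoming endpoint is joined to the clockwise-adjacent outgoing endpoint, after the move to the counterclockwise-adjacent one). Hence $\bar C$ is genuinely \emph{not} invariant under these $\Omega3$ moves, and the invariance of $S=\sum_r \chi(r)\ind^2(r)$ there is a nontrivial local computation---regions merge and split, Euler characteristics shift, and the newborn circle's index must be tracked---of exactly the same kind as the unmatched $\Omega2$ computation you deferred. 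Your argument, taken at face value, would ``prove'' invariance of the whole diagram $\bar C$, which is false.

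This matters because the two computations that carry the actual content of the theorem are precisely the ones not done: the unmatched $\Omega2$ case (where one must show $\Delta S=-2$ for a forward move, so that $\Jm$ jumps by $+2$ and $\Jp$ by $0$), which you explicitly flag as the main obstacle, and the cyclic-triangle $\Omega3$ case, which you dismiss on incorrect grounds. A smaller caution concerns the remaining base cases: the curves $K_{i+1}$ smooth to one large circle containing $i$ mutually \emph{unnested} circles, giving $S=1+3i$ and hence $F^-=-3i$, $F^+=-2i$ as required; if one instead reads them as a fully nested (``coiled'') family, one gets $S=(i+1)^2$ and $F^-=1-(i+1)^2$, which disagrees with $-3i$ for all $i\geq 2$. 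Until the unmatched $\Omega2$ and cyclic $\Omega3$ region-by-region computations are carried out, the proposal is an outline with a false step rather than a proof.
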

We see that $\Jp$ and $\Jm$ admit explicit descriptions using indices.
This is useful for both calculating the values of $\Jp$ and $\Jm$
for given diagrams, as well as proving properties of $\Jp$ and $\Jm$,
such as:
\begin{proposition}
    \label{thm:Jpm-R1-change}
    Choose an orientation of $C$. 
    Under a~left (resp. right) positive $\Omega1$ move, 
    $\Jm$ changes by $-2\ind(c)-1$ (resp. $2\ind(c)-1$) 
    and $\Jp$ changes by $-2\ind(c)$ (resp. $2\ind(c)$),
    where $c$ is the crossing created by the $\Omega1$ move.
    \begin{proof}
        The addition of a loop by a~left positive $\Omega1$ move affects formulas 
        for $\Jp$ and $\Jm$ in only two ways:  
        adding a new region to the complement of $\bar{C}$
        and changing the Euler characteristic of the region that the loop is made in.
        Let $c$ be the crossing created by the $\Omega1$ move. 
        Then the new region in the complement of $\bar{C}$ is a~disk with Euler characteristic $1$
        and index $\ind(c)+1$. 
        The Euler characteristic of the region surrounding the loop decreases by $1$
        and the index remains $\ind(c)$. 
        Thus, the change to $\Jm$ is
        \[ \Delta\Jm  = -(\ind(c)+1)^2 - ((\chi_1 - 1)\ind^2(c) - \chi_1 \ind^2(c)) = -2\ind(c) - 1 \]
        under one positive $\Omega1$ move,
        where $\chi_1$ is the original Euler characteristic of the region surrounding the loop.

        From $\Jp = \Jm + n$, where $n$ is the number of crossings, 
        it follows that $\Jp$ changes by $-2\ind(c)$ under a~left positive $\Omega1$ move.
        
        Changing the orientation of $C$, 
        we obtain the desired results for right positive $\Omega1$ moves.
    \end{proof}
\end{proposition}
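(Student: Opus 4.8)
The plan is to apply Viro's formula (Theorem \ref{thm:explicit-Jpm}) directly, tracking how the smoothed diagram $\bar C$ and the crossing count change under a single positive $\Omega1$ move. Since a positive $\Omega1$ move creates exactly one new crossing $c$, it raises $n$ by one; writing $\Jp = \Jm + n$, which is immediate from the two displayed formulas, it suffices to compute $\Delta\Jm$ and then read off $\Delta\Jp = \Delta\Jm + 1$. I would handle the left case first and recover the right case by reversing orientation.

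For the left case I would analyze $\bar C$ locally. The kink added by the $\Omega1$ move has a single self-crossing, and performing the oriented smoothing of that crossing detaches the loop as a small disjoint circle; so, compared with the old smoothed diagram, only two features of the region structure change: a new disk region appears inside the circle, and the region in which the kink was drawn loses a disk, lowering its Euler characteristic by one. The new disk has $\chi = 1$, while the surrounding region keeps the same index but has Euler characteristic $\chi_1 - 1$ in place of its original $\chi_1$. All other regions and their indices are unchanged, so in the sum $\sum_r \chi(r)\ind^2(r)$ every other term cancels in the difference.

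The remaining input is the identification of the two relevant indices in terms of $\ind(c)$: I claim the surrounding region has index $\ind(c)$ and the new disk has index $\ind(c)+1$. This follows from the definition of the index of a crossing as the average of the indices of its four adjacent regions, together with the fact that the index jumps by exactly $1$ when a single strand is crossed; for a left kink the four quadrants at $c$ consist of the surrounding region (with multiplicity two), the new disk (index one higher), and the region across the strand on the far side (index one lower), whose average is exactly the surrounding index. Substituting $\chi = 1$ and index $\ind(c)+1$ for the new disk, along with the drop $\chi_1 \to \chi_1 - 1$ at the surrounding region, into Viro's formula yields $\Delta\Jm = -(\ind(c)+1)^2 + \ind^2(c) = -2\ind(c) - 1$, and hence $\Delta\Jp = \Delta\Jm + 1 = -2\ind(c)$.

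Finally, for a right positive $\Omega1$ move I would reverse the orientation of $C$. Since $\Jp$ and $\Jm$ are orientation-independent (Definition \ref{def:arnold-invariants}), this is legitimate, and reversing orientation turns a right kink into a left one while negating every winding number, hence replacing $\ind(c)$ by $-\ind(c)$; applying the left-case formulas then gives $\Delta\Jm = 2\ind(c) - 1$ and $\Delta\Jp = 2\ind(c)$. I expect the main obstacle to be precisely the local index bookkeeping of the previous paragraph: correctly pinning down which of the four quadrants at $c$ carry which index, the resulting $+1$ for the new disk, and verifying that the orientation reversal genuinely swaps left and right while flipping the sign of $\ind(c)$. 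Once these are settled the rest is routine substitution into Theorem \ref{thm:explicit-Jpm}.
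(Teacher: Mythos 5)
Your proposal is correct and follows essentially the same route as the paper: applying Viro's formulas to the smoothed diagram, tracking the new disk region of index $\ind(c)+1$ and the Euler characteristic drop of the surrounding region, deducing $\Delta\Jm = -(\ind(c)+1)^2 + \ind^2(c) = -2\ind(c)-1$, obtaining $\Delta\Jp$ from $\Jp = \Jm + n$, and recovering the right-handed case by reversing orientation. The only difference is that you spell out the quadrant-by-quadrant index bookkeeping justifying $\ind(c)+1$ for the disk and $\ind(c)$ for the surrounding region, which the paper simply asserts.
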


\subsection{Shumakovich's formulas for $\St$}
We proceed to Shumakovich's formulas for $\St$.
First we need to define \emph{weights}.

Fix an arbitrary point $p$ on the oriented curve $C$ which is not one of its $n$ crossings.
Label the edges from $1$ to $2n$ following the orientation of the curve,
with the edge containing $p$ being labeled by $1$.

\begin{definition}[weight] \label{def:weight}
    Consider a~crossing $c$.
    Denote the edges pointing towards $c$ by $e_i$ and $e_j$, 
    where $i$ and $j$ are their respective labels,
    with $e_i$ crossing $e_j$ from left to right
    (see Figure \ref{fig:St-weight}).
    Let $\sgn(k)$ be the sign of the integer $k$. Then set
    \[ \weight(c) = \sgn(i-j) ,\]
    \[ \weight(e_i) = \sgn(i-j) ,\]
    \[ \weight(e_j) = -\sgn(i-j) .\]

    Let $r_W$ be the region directly to the left of $e_i$, $r_E$ be the region directly 
    to the right of $e_j$, $r_S$ be the region directly to the right of $e_i$ and left of $e_j$, 
    and $r_N$ be the remaining region surrounding $c$ (see Figure \ref{fig:St-weight}). 
    The weight $\weight(r)$ of a region is the sum of the contributions of all adjacent crossings
    (with multiplicity two if a~region is adjacent to a~crossing in two ways),
    denoted $\weight_c(r)$, which are equal to
    \[ \weight_c(r_W) = \weight_c(r_E) = \frac{1}{2} \sgn(i-j) ,\]
    \[ \weight_c(r_N) = \weight_c(r_S) = -\frac{1}{2} \sgn(i-j) .\]
    \qet
\end{definition}

\begin{remark}
    Writhe of a~knot diagram $D$ is the sum of signs of all crossings:
    $$w(D) = \sum_{c \in \mc{C}(D)} \sgn(c).$$
    Using the weights defined above, one can try to define
    a~curve invariant via $\sum_{c \in \mc{C}(C)} \weight(c)$.
    This is not invariant under the choice of the point $p$,
    but an easy inspection shows that if we subtract $2 \ind(p)$,
    one obtains a~curve invariant.
    Checking how it changes under Reidemeister moves
    and calculating the value on a~simple closed curve one obtains the winding number of $C$:
    $$\mathrm{wind(C)} = -2 \ind(p) + \sum_{c \in \mc{C}(C)} \weight(c).$$
\end{remark}

\begin{figure}[ht]
    \centering
    \includegraphics[width=0.25\textwidth]{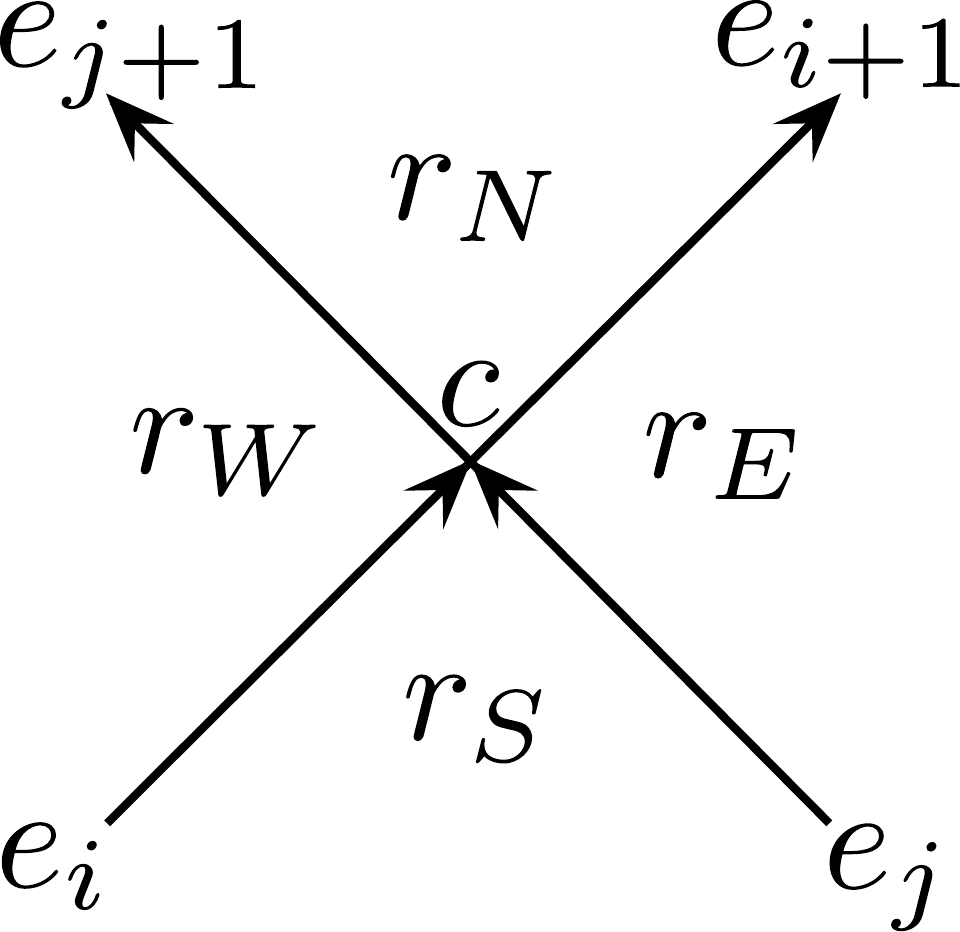}
    \caption{Calculation of weight for crossing $c$ and the edges $e_i$ and $e_j$ pointing towards $c$, and the contribution to the weight of the surrounding regions.}
    \label{fig:St-weight}
\end{figure}

\begin{theorem}[Shumakovich \cite{Sh95}] \label{thm:explicit-St}
    Let $C$ be an oriented curve. 
    Then 
    \begin{equation}
        \St(C) = \sum_{c \in \mathcal{C}(C)} 
        \left( \weight(c) \ind(c) \right) + \delta^2 - \frac{1}{4} ,
        \label{eqn:Shu-1}
    \end{equation}
    \begin{equation}
        \St(C) = \frac{1}{2} \sum_{e \in \mathcal{E}(C)} 
        \left( \weight(e) \ind^2(e) \right) + \delta^2 - \frac{1}{4} ,
        \label{eqn:Shu-2}
    \end{equation}
    \begin{equation}
        \St(C) = \frac{1}{3} \sum_{r \in \mathcal{R}(C)} 
        \left( \weight(r) \ind^3(r) \right) + \delta^2 - \frac{1}{4} ,
        \label{eqn:Shu-3}
    \end{equation}
    where $\delta = \ind(e_p)$, where $e_p$ is is the edge containing $p$.
\end{theorem}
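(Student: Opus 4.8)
The plan is to prove the three formulas for $\St$ by showing that each right-hand side satisfies the defining properties of $\St$ from Definition~\ref{def:arnold-invariants}: invariance under $\Omega2$ moves, a $+1$ jump under positive $\Omega3$ moves, and the correct normalization on the base curves $K_i$. Since $\St$ is uniquely characterized by these properties, verifying them for each proposed expression establishes equality. Before checking the Reidemeister behavior, the first task is to confirm that each formula is independent of the auxiliary choices, namely the basepoint $p$ and its edge-labeling, so that the right-hand sides are genuinely functions of the unoriented curve $C$ alone (orientation-independence should also be checked, though the quadratic/cubic appearance of $\ind$ together with the sign structure of $\weight$ makes this plausible). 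The three formulas are related by resumming the same local weight contributions over crossings, edges, or regions; I expect that proving the equality $\sum_c \weight(c)\ind(c) = \tfrac12\sum_e \weight(e)\ind^2(e) = \tfrac13\sum_r \weight(r)\ind^3(r)$ as a purely combinatorial identity (using $\ind(e)=\tfrac12\sum_{r\in\mc{R}(e)}\ind(r)$, the crossing/edge/region incidence relations, and the definitions of $\weight$ on edges and regions) reduces the problem to verifying a single one of them.

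Concretely, I would first establish the identity relating the three sums. Expanding $\ind^2(e)=\tfrac14\bigl(\ind(r)+\ind(r')\bigr)^2$ for the two regions $r,r'$ adjacent to $e$, and using that at each crossing the four surrounding regions have indices differing by $\pm1$ in a controlled pattern (the $r_N,r_S,r_E,r_W$ configuration of Figure~\ref{fig:St-weight}), the edge sum should telescope into the region sum, and similarly the crossing sum is recovered by grouping the two edges pointing into each crossing. Here the key arithmetic input is that for a crossing the indices satisfy $\ind(r_N)=\ind(r_S)+1$-type relations coming from the orientation of the strands, which convert differences of cubes into differences of squares into the first-order expressions, with the prefactors $\tfrac13,\tfrac12,1$ arising exactly from these algebraic collapses. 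This step is routine but bookkeeping-heavy.

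Having reduced to a single formula, say \eqref{eqn:Shu-1}, I would verify the three defining properties. For $\Omega2$ invariance: an $\Omega2$ move creates or destroys two crossings of opposite weight sign whose indices differ appropriately, so their contributions to $\sum_c \weight(c)\ind(c)$ cancel, while $\delta$ is unaffected if $p$ is kept away from the move. For the $\Omega3$ move: I would analyze the vanishing triangle, track how the three involved crossings change their weights and indices, and show the net change in $\sum_c\weight(c)\ind(c)$ equals $+1$ exactly for positive moves as defined in Definition~\ref{def:R3-positive} — this is where the correction term $\delta^2-\tfrac14$ must be shown to conspire correctly (or remain constant) so that only the crossing sum carries the jump. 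For normalization, I would evaluate the formula directly on each $K_i$, where the indices and weights are explicitly computable, and match against $\St(K_{i+1})=i$, $\St(K_0)=0$.

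The main obstacle I anticipate is the $\Omega3$ computation together with the role of the correction term $\delta^2-\tfrac14$. Under an $\Omega3$ move the strand through $p$ may be one of the three participating strands, so the basepoint-dependent quantity $\delta=\ind(e_p)$ can change even though the total $\St$ must not depend on $p$; reconciling the change in $\delta^2$ against the changes in the individual crossing indices and weights — while confirming the sum increments by precisely $+1$ according to the $q=(-1)^n$ sign rule — is the delicate part. A cleaner alternative is to first prove rigorously that each right-hand side is independent of $p$, which decouples the $\delta^2-\tfrac14$ term from the move analysis and lets me choose $p$ conveniently (far from the move) in each verification; I expect this independence proof, rather than the individual Reidemeister checks, to be the true technical heart of the argument.
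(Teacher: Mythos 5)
The paper gives no proof of this theorem to compare against: it is Shumakovich's result, imported by citation, so your argument can only be measured against the original reference and against the fragment of it the paper does reproduce elsewhere. With that caveat, your strategy is sound and is essentially the standard (Shumakovich-style) route: since the paper records that Arnold's axioms determine $\St$ uniquely (every curve is reachable from some $K_i$ by $\Omega2$ and $\Omega3$ moves), it suffices to prove the right-hand sides are well defined and satisfy those axioms, and you correctly isolate basepoint independence as the technical heart --- when $p$ slides past a crossing $c$, the weight $\weight(c)$ flips sign and $\delta$ changes by $\pm 1$, and only the combination $\sum_{c}\weight(c)\ind(c)+\delta^2$ survives; once that is proved you may keep $p$ far from any move, so the correction term is inert in all the Reidemeister checks. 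Two points where your sketch should be repaired or simplified. First, the equivalence of \eqref{eqn:Shu-1}, \eqref{eqn:Shu-2} and \eqref{eqn:Shu-3} needs no telescoping or expansion of $\bigl(\ind(r)+\ind(r')\bigr)^2$: each edge points into exactly one crossing and each region weight is by definition a sum of per-crossing contributions, so both sums regroup crossing by crossing, and the exact local relations are $\ind(e_i)=\ind(c)+\tfrac12$, $\ind(e_j)=\ind(c)-\tfrac12$, $\ind(r_W)=\ind(c)+1$, $\ind(r_E)=\ind(c)-1$, $\ind(r_N)=\ind(r_S)=\ind(c)$; note that $r_N$ and $r_S$ have \emph{equal} index, not the ``$\ind(r_N)=\ind(r_S)+1$'' relation you posited. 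This regrouping is precisely the computation the paper itself carries out for the $\SCI$ analogue in Theorem \ref{def:SCI-alt}, so you can model that step on it verbatim. Second, in the $\Omega2$ check the cancellation of the two new crossings (equal indices, opposite weights) is not the whole story: you must also verify that the weights of all crossings away from the move are unchanged, which holds because their incident edge labels shift by $0$ or $2$ --- the same observation the paper uses in Proposition \ref{thm:St-R1-change}. With these repairs your outline is a viable proof; it simply cannot be compared line by line with the paper, which takes the statement as given.
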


Again, these descriptions allow to easily examine some properties of $\St$.

\begin{proposition}
    \label{thm:St-R1-change}
    Arnold's invariant $\St$ changes by $+\ind(c)$ under 
    a~left positive $\Omega1$ move and $-\ind(c)$ under a~right positive $\Omega1$ move,
    where $c$ is the new crossing formed by the $\Omega1$ move.
    \begin{proof}
        This follows from Equation (\ref{eqn:Shu-1}).
        First, choose a~point $p$ which does not lie 
        on the edge the $\Omega1$ move is applied to.
        We can do so if the diagram is not a~trivial unknot diagram,
        in which case the proposition is easily checked to be true.

        A~positive $\Omega1$ move adds a~crossing.
        If we keep the same starting point $p$,
        the numbering of edges changes,
        but weights of any other crossings stay the same,
        since the labels of adjacent edges all shift by either $0$ or $2$.
        Let the three edges connected to the new crossing be numbered $k$, $k+1$, and $k+2$.
        For a~left $\Omega1$ move, we get $\weight(c) = \sgn((k+1)-k) = +1$,
        and for a~right one we get $\weight(c) = \sgn(k-(k+1)) = -1$,
        proving the proposition.
    \end{proof}
\end{proposition}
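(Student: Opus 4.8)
The plan is to read off the change in $\St$ directly from Shumakovich's crossing formula (\ref{eqn:Shu-1}), namely $\St(C) = \sum_{c \in \mathcal{C}(C)} \weight(c)\ind(c) + \delta^2 - \frac{1}{4}$, by comparing the two sides for the diagrams before and after the move. The strategy is to argue that a positive $\Omega1$ move leaves every term of this sum untouched except for the contribution of the newly created crossing, so that $\Delta\St$ equals exactly $\weight(c)\ind(c)$, where $c$ is the new crossing.

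First I would dispose of the degenerate case: if the diagram is a trivial unknot diagram with no crossings, the claim is checked by hand. Otherwise I choose the base point $p$ to lie off the edge to which the $\Omega1$ move is applied, which is what I need in order to control both $\delta$ and the edge labelling.

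Next I would establish the invariance of the remaining data. Since a positive $\Omega1$ move is supported in a small disk meeting the diagram in a single arc, every pre-existing crossing lies outside this disk, so the four regions surrounding it, and hence its index, are unchanged; for the same reason $\delta = \ind(e_p)$ is unchanged, and the term $\delta^2 - \frac{1}{4}$ contributes nothing to $\Delta\St$. The move inserts two new edges, so the relabelling raises each old edge label by $0$ or $2$ in a monotone fashion. Because $\weight(c')$ of any crossing $c'$ depends only on $\sgn(i-j)$ for its two incoming edges, and a monotone relabelling preserves the order of any two labels, the weights of all pre-existing crossings are unchanged. Combining these observations, the only net change to the right-hand side of (\ref{eqn:Shu-1}) is the single new summand $\weight(c)\ind(c)$.

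Finally I would compute $\weight(c)$ for the new crossing. Labelling the three edges incident to $c$ as $k, k+1, k+2$ along the orientation and reading off which incoming edge crosses the other from left to right, a left $\Omega1$ move gives $\weight(c) = \sgn((k+1)-k) = +1$ and a right move gives $\weight(c) = \sgn(k-(k+1)) = -1$, whence $\Delta\St = \pm\ind(c)$ as claimed. The one step requiring genuine care is the weight-invariance argument: I must verify that inserting edges only ever raises labels and therefore can never flip the sign of any difference $i-j$, so that the weights of the old crossings are truly preserved; everything else reduces to the locality of the $\Omega1$ move.
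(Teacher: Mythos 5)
Your proposal is correct and follows essentially the same route as the paper: both apply Shumakovich's crossing formula (\ref{eqn:Shu-1}), choose $p$ off the affected edge, argue that the pre-existing crossings' weights and indices (and $\delta$) are unchanged, and then read off $\weight(c) = \pm 1$ for the new crossing from the labels $k, k+1, k+2$. Your explicit observation that the relabelling is \emph{monotone} (labels $< k$ shift by $0$, labels $\geq k$ shift by $2$, so no sign $\sgn(i-j)$ can flip) is a worthwhile sharpening of the paper's terser claim that labels ``shift by either $0$ or $2$,'' but it is a refinement of the same argument rather than a different one.
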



\section{The Self-Crossing Index and bounds for unknotting framed knots}

In this section, we introduce a new knot diagram invariant, 
called the \emph{Self-Crossing Index}, or $\SCI$. 
We prove that it is additive under connected sum
and that it is Vassiliev of order $1$.
We finally show how it provides bounds
for unknotting framed knots
via Theorems \ref{thm:SCI-quadratic-bound}, \ref{thm:SCI-changes}
and \ref{thm:framed-v-normal-Rmoves-comparison}.

\subsection{Definition and properties of $\SCI$}
In the previous section, we defined indices of points with respect to a~curve,
weights of regions, edges and crossings of a~closed curve,
as well as smoothing of a~crossing.
All these generalize to the case of a~knot diagram
by considering the underlying curve of a~diagram.

\begin{definition}[Self-Crossing Index] \label{def:SCI}
    Let $D$ be an oriented knot diagram 
    and let $\mathcal{C}(D)$ be the set of crossings of $D$. Then
    \[ \SCI(D) = \sum_{c \in \mathcal{C}(D)} \sgn(c) \ind(c) \]
    where $\sgn(c)$ is the \emph{sign} of the crossing $c$.
    \qet
\end{definition}

We immediately notice similarity with Equation (\ref{eqn:Shu-1}).
Suppose the knot diagram $D$ is ascending.
Let $p$ be a~\emph{lowest point} of the diagram $D$, 
that is a~point such that
if we move along $D$ starting at $p$,
then each crossing is passed through its undercrossing first.
If the same point $p$ is taken to calculate weights as in the previous section,
then we obtain that $\weight(c) = \sgn(c)$ for any crossing $c$.
Thus
\[ \St(C) = \sum_{c \in \mathcal{C}(D)} \left( \sgn(c) \ind(c) \right) + \delta^2 - \frac{1}{4}\]
and therefore
\begin{theorem}
    \label{thm:SCI-St}
    For an ascending knot diagram $D$ and its underlying curve $C$, 
    \[ \SCI(D) = \St(C) - \delta^2 + \frac{1}{4} ,\]
    where $\delta = \ind(p)$, $p$ being a lowest point of $D$.
\end{theorem}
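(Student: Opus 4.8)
The plan is to obtain the formula directly from Shumakovich's first description of $\St$, Equation~(\ref{eqn:Shu-1}), after isolating the one nontrivial point: the crossing-wise identity $\weight(c) = \sgn(c)$, valid precisely when $D$ is ascending and the basepoint $p$ used to label edges and compute weights is chosen to be a lowest point of $D$. Granting this identity, I would substitute $\weight(c) = \sgn(c)$ into (\ref{eqn:Shu-1}) and compare term by term with the definition of $\SCI$, giving $\St(C) = \sum_{c} \sgn(c)\ind(c) + \delta^2 - \tfrac14 = \SCI(D) + \delta^2 - \tfrac14$, which rearranges to the claimed equation, with $\delta = \ind(e_p) = \ind(p)$ since $p$ lies on the edge $e_p$. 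So the entire content is the identity $\weight(c)=\sgn(c)$.

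First I would extract the combinatorial consequence of the ascending hypothesis. Traversing the underlying curve from the lowest point $p$ and labelling the $2n$ edges in order, each crossing $c$ is first reached along its undercrossing. Hence, of the two edges pointing toward $c$, the incoming edge of the understrand is traversed strictly before the incoming edge of the overstrand, so its label is strictly smaller; write $u$ for the understrand incoming label and $o$ for the overstrand incoming label, so that $u < o$. Only these two incoming labels enter $\weight(c)$, since Definition~\ref{def:weight} reads the weight off the edges pointing toward $c$.

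The heart of the argument is then a bookkeeping comparison of two orientation conventions at $c$. The left-to-right rule in Definition~\ref{def:weight} designates one incoming edge as $e_i$ and the other as $e_j$ from the oriented frame of their two directions, and sets $\weight(c) = \sgn(i-j)$; the sign $\sgn(c)$ is read off the oriented frame of the over- and under-strands. Because both $e_i$ and $e_j$ point toward $c$, the frame determining which incoming edge is $e_i$ is exactly the frame of the over/under directions, up to the over/under labelling. I would split into the two cases according to whether the understrand or the overstrand is assigned the role of $e_i$: in each case the orientation of this common frame simultaneously fixes $\sgn(c)$ and tells us whether $i=u,\,j=o$ or $i=o,\,j=u$, and combined with $u<o$ this forces $\sgn(i-j)=\sgn(c)$. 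Both cases yield $\weight(c)=\sgn(c)$.

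The only real obstacle is this sign bookkeeping: one must pin down the precise orientation convention behind ``$e_i$ crosses $e_j$ from left to right'' and behind the sign of a crossing, and check that they are compatible, so that the inequality $u<o$ produces the equality $\weight(c)=\sgn(c)$ rather than opposite signs. This is a finite, convention-dependent verification with no analytic difficulty, and once it is settled the substitution into (\ref{eqn:Shu-1}) immediately completes the proof.
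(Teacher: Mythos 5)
Your proposal is correct and takes essentially the same route as the paper: choose the lowest point $p$ as the basepoint in Shumakovich's formula (\ref{eqn:Shu-1}), use the ascending hypothesis to get $\weight(c)=\sgn(c)$ for every crossing, and substitute to obtain $\SCI(D)=\St(C)-\delta^2+\frac14$. The only difference is one of detail: the paper simply asserts the identity $\weight(c)=\sgn(c)$, whereas you spell out the label inequality $u<o$ and the orientation-convention casework that justifies it.
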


From this we immediately obtain formulas for $\SCI$ similar to
Equations (\ref{eqn:Shu-2}) and (\ref{eqn:Shu-3}),
under the assumption that $D$ is ascending.
However, we aim to prove such formulas for $\SCI$ in full generality.
For this, we need to define modified weights depending
on the signs of the crossings rather than on the topology of the curve.

\begin{definition} \label{def:SCI-weight}
    Using a set-up similar to that used to define weight (Definition \ref{def:weight}), 
    we define, for a crossing $c$, 
    \[ \tilde{\weight}(e_i) = \sgn(c) ,\]
    \[ \tilde{\weight}(e_j) = -\sgn(c) ,\]
    \[ \tilde{\weight}_c(r_W) = \tilde{\weight}_c(r_E) = \frac{1}{2} \sum_{c \in \mathcal{C}(r)} \sgn(c) ,\]
    \[ \tilde{\weight}_c(r_N) = \tilde{\weight}_c(r_S) = -\frac{1}{2} \sum_{c \in \mathcal{C}(r)} \sgn(c) ,\]
    \[ \tilde{\weight}(r) = \sum_{c \in \mathcal{C}(r)} \tilde{\weight}_c(r), \]
    where $\mathcal{C}(r)$ is the set of all crossings adjacent to the region
    (again, counted with multiplicities).
    \qet
\end{definition}

\begin{theorem} \label{def:SCI-alt}
    Let $D$ be an oriented knot diagram and let $\mathcal{E}(D)$ and $\mathcal{R}(D)$ be the set of edges and regions of $D$, respectively. Then
    \begin{equation}
        \SCI(D) = \frac{1}{2} \sum_{e \in \mathcal{E}(D)} \tilde{\weight}(e) \ind^2(e) ,
        \label{eqn:SCI-2}
    \end{equation}
    \begin{equation}
        \SCI(D) = \frac{1}{3} \sum_{r \in \mathcal{R}(D)} \tilde{\weight}(r) \ind^3(r) .
        \label{eqn:SCI-3}
    \end{equation}
    \begin{proof}
        We follow the argument given in \cite{Sh95}.

        To show that these three formulas of $\SCI$ are equivalent, 
        we will show that the calculations are equivalent in a~neighborhood of a~crossing $c$. 
        From the definition, the contribution of a~crossing $c$ to $\SCI$ is $\sgn(c)\ind(c)$. 

        Denote $\alpha = \ind(c)$. 
        Then $\ind(e_i) = \alpha + \frac{1}{2}$, $\ind(e_j) = \alpha - \frac{1}{2}$.
        Therefore the contribution of the edges $e_i, e_j$
        to the sum (\ref{eqn:SCI-2}) is
        \[ 
            \frac{1}{2} \left( \sgn(c) \left( \alpha + \frac{1}{2} \right)^2 
            - \sgn(c) \left( \alpha - \frac{1}{2} \right)^2 \right)
        = \alpha \sgn(c) = \sgn(c) \ind(c).\]
        This proves the identity (\ref{eqn:SCI-2}).

        Similarly, we can consider the regions around $c$, 
        and consider the contribution to $\tilde{\weight}(r)$ that $c$ makes (i.e. $\pm \sgn(c)/2$).
        Since $\tilde{\weight}(r)$ is just the sum of contributions of adjacent crossings,
        we can rewrite (\ref{eqn:SCI-3}) as
        \[ \SCI(D) = \frac{1}{3} \sum_{c \in \mathcal{C}(D)} 
        \sum_{r \in \mathcal{R}(c)} \pm \sgn(c) \ind^3(r)/2  ,\]
        where $\mathcal{C}(D)$ is the set of crossings of $D$,
        $\mathcal{R}(c)$ is the set of four regions surrounding a crossing $c$
        and the sign $\pm$ depends on whether $r$ is $r_E, r_W, r_N$ or $r_S$ for that crossing.
        However, since
        $\ind(r_W) = \alpha + 1$, $\ind(r_E) = \alpha -1$, and $\ind(r_N) = \ind(r_S) = \alpha$,
        we have
        \begin{align*}
            \frac 1 3 \sum_{r \in \mathcal{R}(c)}  \pm \sgn(c) \ind^3(r)/2 
            &= 
            \frac{1}{3} \left( \frac{\sgn(c)}{2} \left( \left( \alpha + 1 \right)^3 
            + \left( \alpha - 1 \right)^3 \right) 
            - \frac{\sgn(c)}{2} \left( \alpha^3 + \alpha^3 \right) \right)
            \\
            &= \alpha \sgn(c),
        \end{align*}
        which proves (\ref{eqn:SCI-3}).
    \end{proof}
\end{theorem}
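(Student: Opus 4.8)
The plan is to follow Shumakovich's localization strategy and verify both identities crossing by crossing. Since the definition already presents $\SCI(D) = \sum_{c \in \mathcal{C}(D)} \sgn(c)\ind(c)$ as a sum of one local contribution per crossing, it suffices to reorganize each right-hand side into a sum over crossings whose summand indexed by $c$ is exactly $\sgn(c)\ind(c)$. The modified weights in Definition~\ref{def:SCI-weight} are built precisely so that this regrouping is possible: $\tilde\weight(e)$ is read off from the unique crossing that $e$ points into, and $\tilde\weight(r) = \sum_{c \in \mathcal{C}(r)} \tilde\weight_c(r)$ is already a sum of per-crossing contributions.

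First I would record the local index data around a single crossing $c$, writing $\alpha = \ind(c)$. The definitions of edge and crossing indices as averages of adjacent region indices, combined with the one genuinely geometric fact that the index jumps by $\pm 1$ across each strand, force the four surrounding regions to satisfy $\ind(r_W) = \alpha + 1$, $\ind(r_E) = \alpha - 1$, $\ind(r_N) = \ind(r_S) = \alpha$ (consistent with $\ind(c) = \frac14 \sum_{r \in \mathcal{R}(c)} \ind(r) = \alpha$), and the two incoming edges to satisfy $\ind(e_i) = \alpha + \tfrac12$ and $\ind(e_j) = \alpha - \tfrac12$ via $\ind(e) = \frac12\sum_{r \in \mathcal{R}(e)}\ind(r)$. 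Pinning down this local picture, and in particular matching which region and edge sits where relative to the orientation so that the signs land on $+\sgn(c)$ versus $-\sgn(c)$ correctly, is the one point that needs care; everything after it is arithmetic.

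With the local data in hand, for the edge formula~(\ref{eqn:SCI-2}) I would use that along the orientation each of the $2n$ edges points into a unique crossing, so the edges partition into the pairs $\{e_i, e_j\}$ indexed by their terminal crossing, with $\tilde\weight(e_i) = \sgn(c)$ and $\tilde\weight(e_j) = -\sgn(c)$. Regrouping by terminal crossing, the contribution of $c$ is $\frac12 \sgn(c)\bigl((\alpha + \tfrac12)^2 - (\alpha - \tfrac12)^2\bigr) = \sgn(c)\,\alpha = \sgn(c)\ind(c)$, and summing over all $c$ recovers $\SCI(D)$. For the region formula~(\ref{eqn:SCI-3}) I would instead expand $\tilde\weight(r) = \sum_{c \in \mathcal{C}(r)}\tilde\weight_c(r)$ and swap the order of summation to group by crossing, so that the block for $c$ ranges over its four adjacent regions. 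Using the region indices above and $(\alpha+1)^3 + (\alpha-1)^3 = 2\alpha^3 + 6\alpha$, that block evaluates to $\frac13 \cdot \frac{\sgn(c)}{2}\bigl((\alpha+1)^3 + (\alpha-1)^3 - 2\alpha^3\bigr) = \sgn(c)\,\alpha$, again summing to $\SCI(D)$. The only subtlety beyond the bookkeeping is the legitimacy of the summation swap when a region meets a crossing more than once; I would guard against this by keeping $\mathcal{C}(r)$ and $\mathcal{R}(c)$ counted with multiplicity throughout, so that each crossing–region incidence is tallied exactly once on both sides. With that convention both regroupings are honest identities, the computation is entirely local, and no global or topological input about $D$ is needed.
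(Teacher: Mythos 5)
Your proposal is correct and follows essentially the same route as the paper's proof: localize at each crossing, use the local index data $\ind(e_i)=\alpha+\tfrac12$, $\ind(e_j)=\alpha-\tfrac12$, $\ind(r_W)=\alpha+1$, $\ind(r_E)=\alpha-1$, $\ind(r_N)=\ind(r_S)=\alpha$, and verify that each crossing's regrouped contribution equals $\sgn(c)\ind(c)$ by the same two algebraic identities. Your extra care about the regrouping (each edge having a unique terminal crossing, and counting crossing--region incidences with multiplicity) only makes explicit what the paper leaves implicit.
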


Another remarkable property of $\SCI$ is its additivity under connected sums.

\begin{theorem} \label{thm:SCI-connected-sum}
    Let $D$ and $E$ be two knot diagrams and $D \# E$ denote their connected sum.
    Then
    \begin{equation}
        \SCI(D \# E) = \SCI(D) + \SCI(E).
        \label{eqn:SCI-additivity}
    \end{equation}
    \begin{proof} 
        The connected sum of $D$ and $E$ leaves the signs of the crossings unchanged.
        In addition, the indices of the regions do not change, 
        as the operation simply merges together two regions with the same index.
        Thus the indices and signs of the crossings stay the same
        and summing along all the crossings of $D$ and $E$
        gives the identity in the proposition.
    \end{proof}
\end{theorem}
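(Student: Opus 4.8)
The plan is to unwind the definition $\SCI(D) = \sum_{c \in \mathcal{C}(D)} \sgn(c)\ind(c)$ and argue that, term by term, the connected sum neither changes which crossings appear nor alters the sign or index of any of them. First I would fix a concrete model of the connected sum: isotope $D$ into a disk $B_D$ and $E$ into a disjoint disk $B_E$, positioned so that $B_E$ lies in the unbounded region of $D$ and $B_D$ in the unbounded region of $E$, and form the underlying curve $C_{D\#E}$ by removing from each diagram a short outer arc adjacent to its unbounded region and joining the four resulting endpoints by a thin oriented band lying in the complement of $B_D \cup B_E$. With this model the crossing set splits as a disjoint union $\mathcal{C}(D\#E) = \mathcal{C}(D) \sqcup \mathcal{C}(E)$, since the band introduces no new crossings and its two spliced strands run parallel.

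Next I would verify the two pointwise claims. The sign of a crossing is purely local data (the over/under information together with the orientations of the two strands), and none of this is disturbed by the splicing, so $\sgn$ is preserved for every $c \in \mathcal{C}(D) \sqcup \mathcal{C}(E)$. The substantive claim is that indices are preserved, i.e. $\ind_{C_{D\#E}}(c) = \ind_{C_D}(c)$ for $c \in \mathcal{C}(D)$ and symmetrically for $E$. Since $\ind(c)$ is $\tfrac14$ times the sum of $\ind$ over the four adjacent regions, it suffices to show the region indices are unchanged, and for this I would invoke the degree description of the index directly: for a point $p \in B_D$, parametrize $C_{D\#E}$ so that it first traverses the $D$-part and then makes a single excursion along the band through $E$ and back. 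Because that excursion lives entirely in the unbounded region of $D$ and returns near the splice without surrounding $p$, the vector $\gamma(t) - p$ undergoes zero net rotation during it, so $\ind_{C_{D\#E}}(p) = \ind_{C_D}(p)$. Equivalently, and as the paper phrases it, the only regions whose indices could change are the two unbounded ones, and these merge into a single region still of index $0$.

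With these facts the conclusion is immediate:
\[
\SCI(D\#E) = \sum_{c \in \mathcal{C}(D)} \sgn(c)\ind(c) + \sum_{c \in \mathcal{C}(E)} \sgn(c)\ind(c) = \SCI(D) + \SCI(E).
\]
The main obstacle lies entirely in the index computation: one must pin down the connected-sum model precisely enough — splicing in the index-$0$ regions — that the $E$-excursion is genuinely unlinked from the viewpoint of every point of $D$, since a careless splice performed in an interior region of $D$ would alter winding numbers and destroy additivity. Once the standard outer-region splicing is fixed, the winding-number argument (or the region-merging observation) is routine, and the sign and crossing-set claims are immediate from locality.
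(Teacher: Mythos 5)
Your proposal is correct and follows essentially the same route as the paper's proof: signs are preserved because they are local, and indices are preserved because the splice happens in the unbounded (index-$0$) regions, so the sum over crossings splits. You simply make explicit, via the winding-number computation, what the paper compresses into the remark that the operation ``merges together two regions with the same index.''
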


We also note that $\SCI$ is a~\emph{Vassiliev (diagram) invariant} of order $1$.

\begin{definition}[finite type/Vassiliev diagram invariant \cite{CDM12}] \label{def:vassiliev}
    Let $D$ be a~knot diagram.
    Let $S$ be a~subset of crossings of $D$, $S \subset \mc{C}(D)$.
    For a~knot diagram invariant $I$ we define, inductively,
    \[ I_{S}(D) = I_{\left( S \setminus \{c\} \right)}(D) - I_{\left( S \setminus \{c\} \right)}(D_{c}) ,\]
    where $c$ is an arbitrary crossings in $S$ 
    and $D_{c}$ is the diagram $D$ with the crossing $c$ changed.
    Equivalently,
    \[ I_{S}(D) = \sum_{X \subseteq S} (-1)^{|X|} I(D_{X}) ,\]
    where $D_X$ is the diagram $D$ with all the crossings from $X$ changed.

    We define $I$ to be a~\emph{Vassiliev invariant of order at most} $m \geq 0$ 
    (or \emph{finite type invariant})
    if $I_S(D) = 0$ for any diagram $D$ and any subset $S \subset \mc{C}(D)$ such that $|S|=m+1$.
    We say that $I$ is exactly \emph{of order $m$} if it is of order at most $m$
    and there is a~diagram $D$ and set $S \subset \mc{C}(D)$ such that $|S|=m$
    and $I_S(D) \neq 0$.
    \qet
\end{definition}

\begin{remark}
    Diagram invariants arising from curve invariants (e.g. $\St$, $J^+$, $J^-$) 
    are Vassiliev of order $0$.
\end{remark}

\begin{theorem} \label{thm:SCI-finite-type}
    $\SCI$ is a Vassiliev invariant of order 1.
    \begin{proof}
        Take any knot diagram $D$ with at least two crossings. 
        Let $a \neq b$ be two crossings of $D$.
        Clearly, changing a~crossing does not change any indices of crossings
        and it changes the sign of one crossing.
        Thus we have
        \begin{equation*}\label{eq:SCI-finite-type-0}
            \begin{split}
                \SCI_{\{a\}}(D) & = \SCI(D) - \SCI(D_a) \\
                & = 2 \sgn(a) \ind(a) . 
            \end{split}
        \end{equation*}
        But similarly we have $\SCI_{\{a\}}(D_b) = 2 \sgn(a) \ind(a)$.
        Thus $\SCI_{\{a,b\}}(D) = 0$, so $\SCI$ is Vassiliev of order at most $1$.

        Finally, taking any diagram $D$ which has a~crossing $a$
        of index $\ind(a) \neq 0$ we get that $\SCI$ is of order $1$.
    \end{proof}
\end{theorem}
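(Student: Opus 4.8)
Looking at this, I need to prove that $\SCI$ is a Vassiliev invariant of order exactly 1.The plan is to verify the two halves of Definition \ref{def:vassiliev} separately: first that $\SCI$ is of order at most $1$, i.e.\ that the second finite difference $\SCI_{\{a,b\}}(D)$ vanishes for any two distinct crossings $a,b$, and then that it is not of order $0$, by exhibiting a single crossing on which the first finite difference is nonzero.

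The crucial observation is that switching a crossing (interchanging its over- and under-strand) leaves the underlying plane curve of $D$ completely unchanged. Since $\ind$ is defined purely in terms of this underlying curve, the index $\ind(c)$ of every crossing $c$ is unaffected by any crossing change; the only quantity in the formula $\SCI(D)=\sum_{c}\sgn(c)\ind(c)$ that changes when we switch a crossing $a$ is the single sign $\sgn(a)$, which flips. I would therefore compute directly
\[ \SCI_{\{a\}}(D) = \SCI(D) - \SCI(D_a) = 2\,\sgn(a)\,\ind(a), \]
the contribution of $a$ being the only term that differs between $D$ and $D_a$.

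The key point for the order-at-most-$1$ claim is that this expression $2\,\sgn(a)\,\ind(a)$ depends only on the sign and index of $a$, and both of these are untouched by switching any \emph{other} crossing $b$. Hence $\SCI_{\{a\}}(D_b)$ equals the same value, and subtracting gives
\[ \SCI_{\{a,b\}}(D) = \SCI_{\{a\}}(D) - \SCI_{\{a\}}(D_b) = 0. \]
By the alternating-sum description of $I_S$ this forces $\SCI_S(D)=0$ for every $S$ with $|S|=2$, and hence for all larger $S$, so $\SCI$ has order at most $1$.

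Finally, for sharpness I would produce any diagram carrying a crossing $a$ with $\ind(a)\neq 0$ --- for instance a diagram whose underlying curve has one loop nested inside another, so that the inner crossing is surrounded by regions of nonzero winding number. For such a diagram $\SCI_{\{a\}}(D)=2\,\sgn(a)\,\ind(a)\neq 0$, so $\SCI$ is not of order $0$ and is therefore of order exactly $1$. I expect no serious obstacle here; the only point genuinely requiring care is the justification that crossing changes preserve all indices, which is immediate once one recalls that indices are invariants of the underlying curve rather than of the diagram's over/under data.
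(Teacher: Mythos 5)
Your proposal is correct and follows essentially the same argument as the paper: crossing changes preserve all indices since they depend only on the underlying curve, so $\SCI_{\{a\}}(D) = 2\sgn(a)\ind(a)$ is unaffected by switching any other crossing $b$, giving $\SCI_{\{a,b\}}(D)=0$, and sharpness follows from any diagram with a crossing of nonzero index. No meaningful differences from the paper's proof.
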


\subsection{Bounds for unknotting via $\SCI$}
We now prove Theorem \ref{thm:SCI-changes}, 
which is the key tool in establishing bounds for unknotting framed knots
using $\SCI$.

\begin{proof}[Proof of Theorem \ref{thm:SCI-changes}]
    For $\Omega1\mr{F}$ or $\Omega2$ moves, 
    each move creates or removes two crossings of opposite sign
    and of the same index, thus preserving $\SCI$.

    For $\Omega3$ moves, 
    the signs of the crossings remain unchanged, 
    so it is enough to consider changes of indices of the crossings.
    There are eight cases that need to be considered (cf. Figure \ref{fig:R3-cases}).
    We consider the case of an $\Omega3a$ move
    since all the other cases are similar.

    \begin{figure}[ht] 
        \centering
        \includegraphics[width=0.6\textwidth]{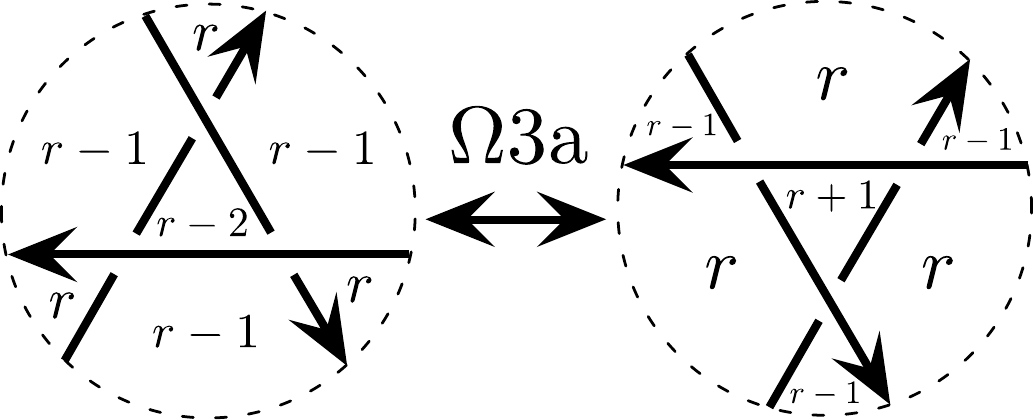}
        \caption{Changes to indices of regions adjacent 
        to the crossings involved in a $\Omega3a$ move.}
        \label{fig:R3aSCI}
    \end{figure}

    For a~forward $\Omega3a$ move (left to right in Figure \ref{fig:R3aSCI}), 
    all three indices of crossings increase by $1$.
    The signs of the crossings are $+1, +1$ and $-1$,
    so the overall change to $\SCI$ equals $+1$, as claimed.
\end{proof}

\begin{figure}[ht]
    \centering
    \includegraphics[width=0.8\textwidth]{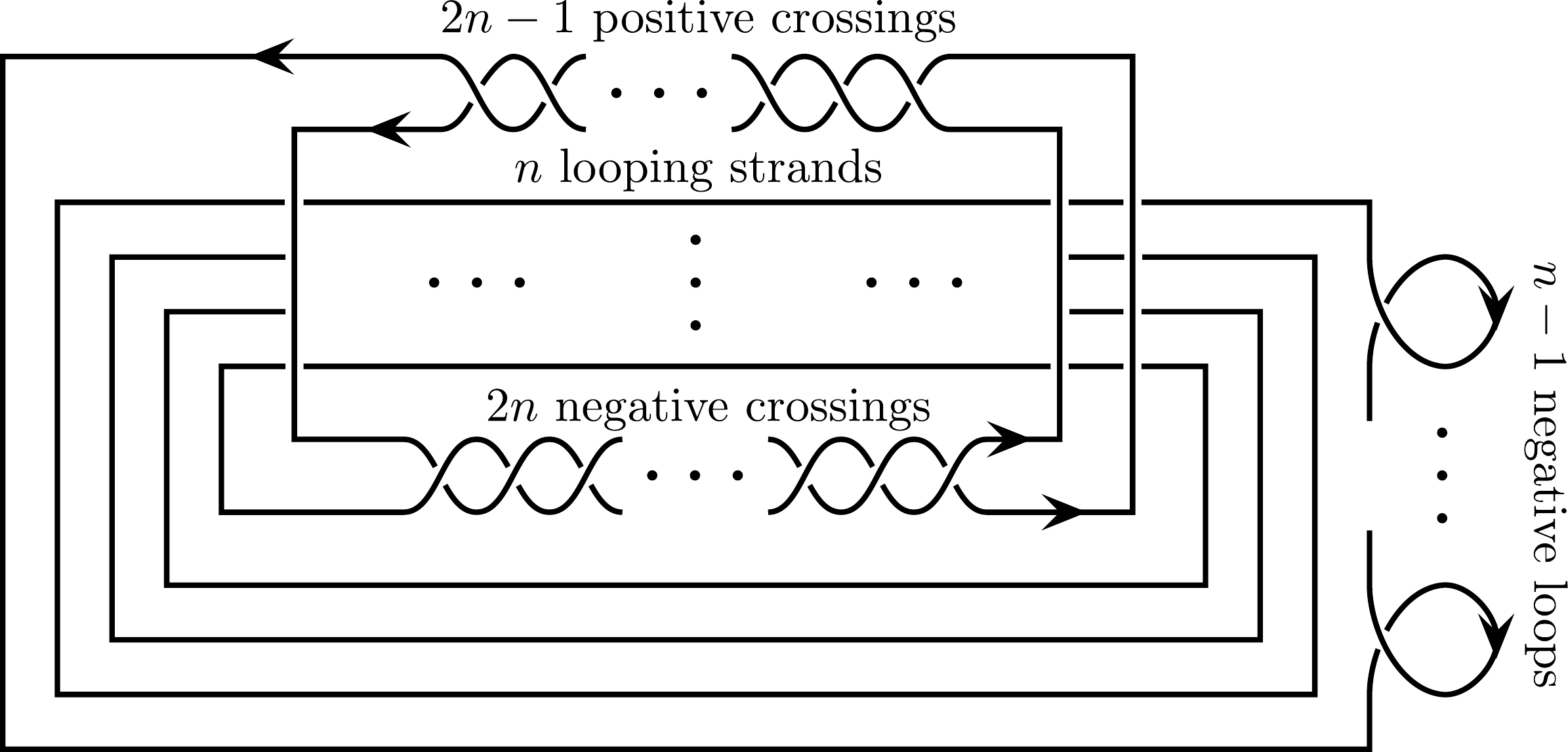}
    \caption{The family of unknots $D_n$.}
    \label{fig:quadratic-family}
\end{figure}

\begin{corollary}
    For an~unknot diagram $D$,
    the number of Reidemeister moves of type $\Omega3$ needed to unknot $D$
    is greater or equal to $|\SCI(D)|$.
    \label{cor:SCI-bounds}
    \begin{proof}
         Since $\SCI$ is zero for any trivial knot diagram,
         the result follows from Theorem \ref{thm:SCI-changes}.
    \end{proof}
\end{corollary}

\begin{proof}[Proof of Theorem \ref{thm:SCI-quadratic-bound}]
    Through computation, we find that 
    \[ \SCI(D_n) = \frac{1}{2} \left( 3n^2 - n + 2 \right) .\]
    The desired result follows from Corollary \ref{cor:SCI-bounds}.
\end{proof}

Hass and Nowik \cite{HN10} found a lower bound of $2n^2 + 3n -2$ using $\HN$ 
(see Definition \ref{def:Hass-Nowik}) for unknotting a~similar family of unknots 
using regular Reidemeister moves.
Using the same procedure as Hass and Nowik \cite{HN10}, 
one obtains a~quadratic lower bound of $2n^2 + 2n - 1$ for unknotting $D_n$,
even in the framed setting.
Indeed, we have
\[ \HN(D_n) = n X_n + n X_{-n} + (2n-1)X_{-1} + (4n-1) Y_0. \] 
Let $g \colon \GZ \rightarrow \Z$ be the homomorphism defined by $g(X_k) = 1 + |k|$ and $g(Y_k) = -1 - |k|$. 
Then \[ g \left( \HN(D_n) \right) = 2n^2 + 2n -1 .\] 
Let $R$ be the set of $\pm(X_k + Y_k)$, $\pm(X_k + Y_{k+1})$, 
$\pm(X_{k+1} - X_k)$ and $\pm(Y_{k+1} - Y_k)$,
for all integers $k$,
which represent all possible changes of $\HN$ under framed Reidemeister moves
(see \cite{HN08} for discussion on changes of $\HN$ under Reidemeister moves).
Since $|g(r)| \le 1$ for all $r \in R$, 
the lower bound for the number of framed Reidemeister moves 
to unknot $D_n$ obtained from $\HN$ is $2n^2 + 2n -1$.

This lower bound is higher than that found by $\SCI$, which has $\frac{3}{2}$ as the quadratic coefficient.
However, $\SCI$ is still useful, as it provides bounds on the minimal number of $\Omega3$ moves.
$g(\HN)$ does not provide such a bound, 
as it changes under unmatched $\Omega2$ moves. 
In fact, the change of $\HN$ under a~$\Omega3$ move 
can be expressed as a~sum of changes under a~matched and unmatched $\Omega2$ move.
Therefore, the value of $\HN$ is not sufficient to distinguish $\Omega3$ moves from
combinations of $\Omega2$ moves.

Finally, we show that that the minimal number of framed Reidemeister moves for unknotting 
can be degrees higher than the number of regular Reidemeister moves needed.
One example is the following family of unknot diagrams $L_n$.

\begin{figure}[ht]
    \centering
    \includegraphics[width=0.5\textwidth]{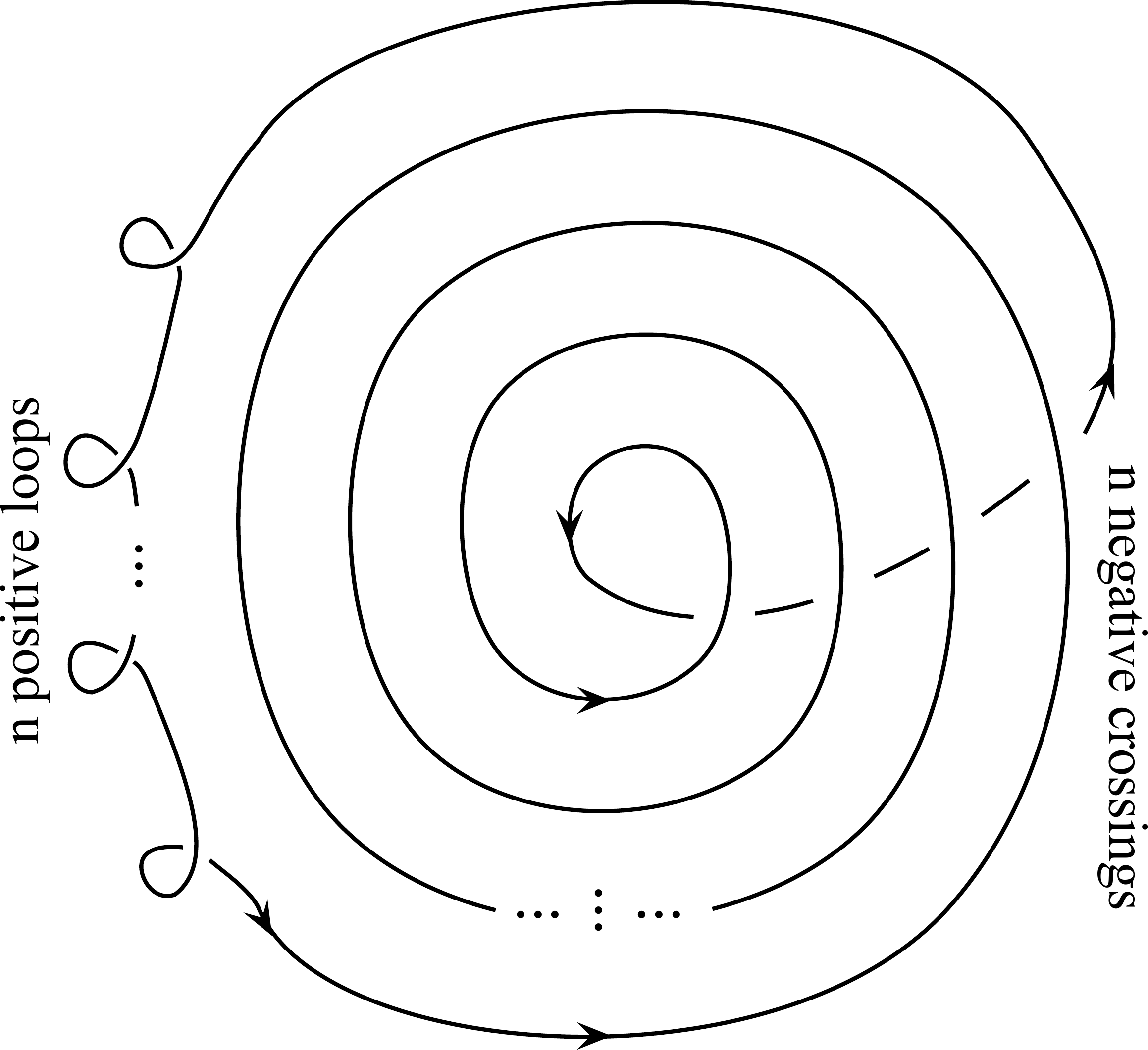}
    \caption{A family of unknots $L_n$.}
    \label{fig:loop-family}
\end{figure}

\begin{proof}[Proof of Theorem \ref{thm:framed-v-normal-Rmoves-comparison}]
    Clearly, $L_n$ may be unknotted using $2n$ Reidemeister moves.
    Moreover, $L_n$ has $2n$ crossings, and thus needs at least $n$ Reidemeister moves to unknot.
    Therefore the minimal unknotting sequence has length $\Theta(n)$.

    As a~framed unknot, $L_n$ may be unknotted inductively in the following way.
    ``Push'' a~loop from outside onto the loop in the middle of the diagram
    using $2n$ moves of type $\Omega2$ and $n$ moves of type $\Omega3$.
    Then, use a~$\Omega2$ move in the middle of the diagram to obtain $L_{n-1}$.

    On the other hand, calculating $\SCI$ for $L_n$ gives that
    \[ \SCI(L_n) = \frac{n(n+1)}{2}, \]
    so we need at least this number of $\Omega3$ moves to unknot $L_n$
    as a~framed unknot.
    This proves $L_n$ is optimally unknotted in $\Theta(n^2)$ moves.
\end{proof}

\begin{remark}
    Since $\SCI$ gives bounds for the number of $\Omega3$ moves,
    we may strengthen the bound in the proof above by considering
    other invariants (e.g. the number of crossings)
    and bounds on $\Omega1$ and $\Omega2$ moves that these provide.
\end{remark}

\section{Comparison with the Hass-Nowik invariant}

In this section, we recall the definition of a~knot diagram invariant $\HN$
given by Hass and Nowik in \cite{HN08}.
The invariant was used by Hass and Nowik to prove quadratic bounds for unknotting
a~family of diagrams almost identical to the family $D_n$.
We prove it is additive under connected sum
and is not a~Vassiliev invariant.
We end this section with a discussion of the relationship
between forward/backward and positive/negative character of $\Omega3$ moves,
which is established in Proposition \ref{thm:R3-classification-relation}.

\subsection{$\HN$ and its properties}
\begin{definition}[Hass-Nowik diagram invariant \cite{HN08}] \label{def:Hass-Nowik}
    Let $D$ be an~oriented knot diagram.
    Denote by $\lk$ the linking number of a~two-component link.
    For such $D$, we define
    \[ \HN(D) = \sum_{c \in \mathcal{C_+}(D)} X_{\lk(D^c)} 
    + \sum_{c \in \mathcal{C_-}(D)} Y_{\lk(D^c)} \]
    where $\mathcal{C_+}(D)$ be the set of positive crossings and $\mathcal{C_-}(D)$ 
    be the set of negative crossings of $D$, and $D^c$
    denotes the two-component link obtained by smoothing $D$ at $c$.
    This invariant takes values in $\G_\Z$,
    the free abelian group with basis $\{ X_k , Y_k \}_{s \in \Z}$. 
    \qet
\end{definition}

\begin{remark}
    In \cite{HN08}, $\HN$ is a~part of a~larger family
    $\mr{I}_\phi$ defined for any 2-component link invariant $\phi$.
    Precisely, $\HN = \mr{I}_{\lk}$.
\end{remark}

It turns out that $\HN$ is additive under connected sum,
similarly to $\SCI$.

\begin{theorem} \label{thm:HN-connected-sum}
    For any two knot diagrams $D$ and $E$, \[ \HN(D \# E) = \HN(D) + \HN(E) .\]
    \begin{proof} 
        Let $\mathcal{C}(D)$ and $\mathcal{C}(E)$ be the sets of crossings of $D \# E$ 
        that come from $D$ and $E$ respectively. 
        Let $\tilde D$ and $\tilde E$ be the parts of the diagram $D \# E$ 
        that come from $D$ and $E$, respectively. 
        The linking number of a~two-component link is equal to 
        the half of the sum of signs of crossings between the components.
        After smoothing a~crossing $ a \in \mathcal{C}(D)$, 
        $\tilde E$ is contained entirely within one of the two components. 
        Thus, none of the crossings in $\mathcal{C}(E)$ contribute to the linking number 
        of the two-component link, 
        meaning that the link $(D \# E)^a$ has the same linking number as $D^a$.

        The same reasoning shows that $\lk(D \# E)^b = \lk(E^b)$ for any $b \in \mathcal{C}(E)$.
        Thus, $\HN(D \# E) = \HN(D) + \HN(E)$.
    \end{proof}
\end{theorem}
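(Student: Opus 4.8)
The plan is to reduce the additivity of $\HN$ to a~statement about linking numbers of smoothings, exploiting the fact that forming the connected sum neither creates nor destroys crossings. First I would fix a~presentation of $D \# E$ in which a~small arc of $D$ and a~small arc of $E$ are removed and the four resulting endpoints are joined, so that the underlying curve of $D \# E$ is a~single closed curve containing a~contiguous arc $\alpha_E$ that carries exactly the crossings of $E$ and a~complementary arc $\alpha_D$ that carries exactly the crossings of $D$. Under this identification $\mc{C}(D \# E) = \mc{C}(D) \sqcup \mc{C}(E)$, the sign of each crossing is unchanged, and a~crossing is positive (resp. negative) in $D \# E$ if and only if it is positive (resp. negative) in its diagram of origin. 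It therefore suffices to show that for every crossing $a \in \mc{C}(D)$ one has $\lk\big((D \# E)^a\big) = \lk(D^a)$, together with the symmetric identity for crossings of $E$; granting this, the two sums defining $\HN(D \# E)$ split exactly into the sums defining $\HN(D)$ and $\HN(E)$.

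For the key step I would argue that smoothing a~crossing $a$ coming from $D$ leaves the arc $\alpha_E$ entirely within one of the two resulting components. Smoothing modifies the curve only in a~small neighborhood of $a$, which is disjoint from $\alpha_E$; since $\alpha_E$ is a~connected arc of the curve and the smoothing splits the curve into two closed loops, $\alpha_E$ must lie in a~single loop. Consequently every self-crossing of $E$ is a~crossing of that one component with itself and contributes nothing to the linking number between the two components of $(D \# E)^a$.

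Finally I would compute the linking number as half the signed count of inter-component crossings. By the previous paragraph the crossings of $E$ make no contribution, so the inter-component crossings of $(D \# E)^a$ are precisely those arising from $\mc{C}(D)$. These coincide with the inter-component crossings of $D^a$, because the $D$-part of $D \# E$ and its smoothing at $a$ are combinatorially identical to $D$ and $D^a$ away from the connected-sum region, which contains no crossings. Hence $\lk\big((D \# E)^a\big) = \lk(D^a)$, and by symmetry $\lk\big((D \# E)^b\big) = \lk(E^b)$ for $b \in \mc{C}(E)$, completing the proof. The main obstacle I anticipate is making precise the claim that $\alpha_E$ sits in a~single component after smoothing: this requires being careful about the definition of the connected sum and checking that the two strands entering the connected-sum region belong to the same loop, so that the $E$-tangle is not split between the two components.
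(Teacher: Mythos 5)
Your proposal is correct and takes essentially the same route as the paper's proof: smooth a crossing $a$ coming from $D$, observe that the $E$-part lies entirely in one component of the resulting two-component link so its crossings cannot contribute to the linking number, conclude $\lk\left((D \# E)^a\right) = \lk(D^a)$, and sum over crossings. Your explicit connectedness argument (that the arc $\alpha_E$ is untouched by the smoothing and hence stays in a single component) merely fills in a point the paper asserts without elaboration.
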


Unlike $\SCI$, $\HN$ is not a~Vassiliev invariant.
We show this using the standard diagrams of $(2,p)$-torus knots
(for $p$ odd), which we denote $T(2,p)$.
These diagrams are characterized by the property that 
they have $p$ positive crossings and are alternating 
(cf. Figure \ref{fig:torus-2+5}).

\begin{figure}[ht]
    \centering
    \includegraphics[width=0.2\textwidth]{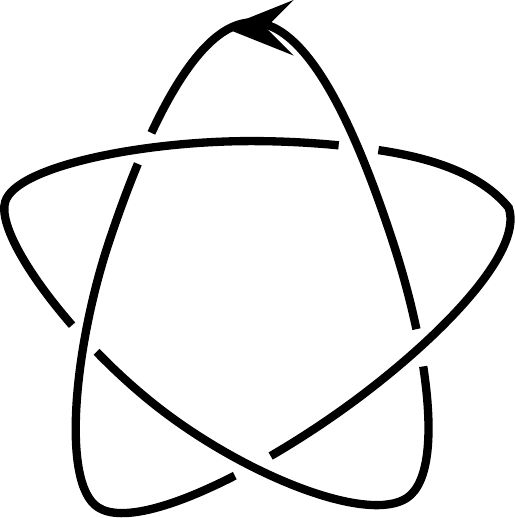}
    \caption{Diagram of the $(2,5)$-torus knot}
    \label{fig:torus-2+5}
\end{figure}

\begin{lemma} \label{lem:torus-knot-HN}
    Let $S$ be the set of all crossings of $T(2,p)$. 
    Let $S_k$ be any subset of $S$ with cardinality $k$, 
    and $T(2,p)_{S_k}$ be the knot diagram of $T(2,p)$ with the crossings of $S_k$ changed. 
    Then
    \begin{equation*}
        \HN(T(2,p)_{S_k}) = (p-k)X_{\frac{p-2k-1}{2}} + kY_{\frac{p-2k+1}{2}} .
    \end{equation*}
    \begin{proof}
        First, notice that any crossing created by smoothing
        a~crossing in $T(2,p)$ is a~crossing between different components
        of the link obtained.
        Since $T(2,p)$ differs from $T(2,p)_{S_k}$ by just crossing changes,
        the same is true for $T(2,p)_{S_k}$.

        Therefore, smoothing a~positive crossing of $T(2,p)_{S_k}$
        leaves $p-k-1$ positive crossings and $k$ negative crossings,
        so each positive crossing contributes $X_{\frac{p-2k-1}{2}}$ to $\HN$.
        Similarly, smoothing a~negative crossing contributes
        $Y_{\frac{p-2k+1}{2}}$ to $\HN$ since there are $p-k$ positive
        and $k-1$ negative crossings left.
    \end{proof}
\end{lemma}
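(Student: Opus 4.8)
The plan is to compute $\HN(T(2,p)_{S_k})$ directly from the definition by understanding the smoothings of the diagram. The key structural observation, which I would establish first, is that $T(2,p)$ is a diagram in which the two strands run parallel and every crossing is a crossing between the two ``halves'' of the knot in the sense that smoothing any single crossing splits the diagram into a two-component link. I would verify that smoothing any crossing $c$ yields a link whose two components are precisely the two arcs separated by $c$, and that every \emph{other} crossing of the diagram is a crossing \emph{between} these two components rather than a self-crossing of either one. This is the heart of the argument and the step I expect to be the main obstacle: one must check that the alternating $(2,p)$-torus structure guarantees that no remaining crossing becomes an intra-component crossing after the smoothing, and moreover that this property is insensitive to crossing changes (since changing a crossing alters $\sgn$ but not the over/under-agnostic combinatorics of which strand meets which). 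Because $T(2,p)_{S_k}$ differs from $T(2,p)$ only by crossing changes, the same splitting behavior holds for it.

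Granting this, the linking number computation is a short sign count. I would recall that the linking number equals half the signed count of inter-component crossings. After smoothing one crossing of $T(2,p)_{S_k}$, the remaining $p-1$ crossings are all inter-component, and among them there are (depending on the sign of the smoothed crossing) a known number of positive and negative crossings. Specifically, $T(2,p)_{S_k}$ has $p-k$ positive and $k$ negative crossings in total. I would then split into two cases according to the sign of the smoothed crossing $c$:

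\begin{enumerate}
    \item If $c$ is positive, the remaining inter-component crossings number $p-k-1$ positive and $k$ negative, so $\lk(D^c) = \tfrac{1}{2}\bigl((p-k-1) - k\bigr) = \tfrac{p-2k-1}{2}$, contributing $X_{\frac{p-2k-1}{2}}$.
    \item If $c$ is negative, the remaining inter-component crossings number $p-k$ positive and $k-1$ negative, so $\lk(D^c) = \tfrac{1}{2}\bigl((p-k) - (k-1)\bigr) = \tfrac{p-2k+1}{2}$, contributing $Y_{\frac{p-2k+1}{2}}$.
\end{enumerate}

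Finally I would assemble the total. Summing the $X$-contribution over all $p-k$ positive crossings and the $Y$-contribution over all $k$ negative crossings, and using that each crossing's contribution is independent of \emph{which} crossing is smoothed (the counts above depend only on the sign of $c$, not its position), gives
\[
    \HN(T(2,p)_{S_k}) = (p-k)\,X_{\frac{p-2k-1}{2}} + k\,Y_{\frac{p-2k+1}{2}},
\]
as claimed. The only genuinely nontrivial input is the inter-component claim; once that is nailed down the rest is bookkeeping, and I would make sure to phrase the inter-component argument so that it applies uniformly to every crossing change, since the whole point of the lemma is to treat all $2^{|S|}$ crossing-change variants at once.
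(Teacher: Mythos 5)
Your proposal is correct and follows essentially the same route as the paper: the key observation that after smoothing any crossing of $T(2,p)_{S_k}$ all remaining crossings are inter-component (a property unaffected by crossing changes), followed by the sign count giving linking numbers $\frac{p-2k-1}{2}$ and $\frac{p-2k+1}{2}$ for positive and negative smoothed crossings respectively. Your write-up is somewhat more explicit about why the inter-component property holds and persists under crossing changes, but the argument is the same.
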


\begin{theorem}\label{thm:Hass-Nowik-finite-type}
    $\HN$ is not a Vassiliev diagram invariant.
    \begin{proof}
        Let $\mc{C}_p$ be the set of all crossings of $T(2,p)$.
        From Lemma \ref{lem:torus-knot-HN} we obtain that
        \begin{equation*}
            \HN_{\mc{C}_p}(T(2,p)) = 
            \sum_{S \subset \mc{C}_p}
            (p-|S|) X_{\frac{p-2|S|-1}{2}} + |S| Y_{\frac{p-2|S|+1}{2}}
        \end{equation*}
        and since there is only one subset $S \subset \mc{C}_p$ such that $|S|=0$,
        therefore the coefficient of $X_{\frac{p-1}{2}}$ in the sum above
        is equal to $p \neq 0$,
        so $\HN_{\mc{C}_p}(T(2,p)) \neq 0$.
        This finishes the proof,
        since if $\HN$ was a~Vassiliev invariant of order $n$,
        then for any $p>n$ we would have $\HN_{\mc{C}_p}(T(2,p)) = 0$.
    \end{proof}
\end{theorem}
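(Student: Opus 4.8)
The plan is to leverage Lemma \ref{lem:torus-knot-HN} directly, since the standard diagrams $T(2,p)$ furnish a family of knots with arbitrarily many crossings on which $\HN$ can be computed after crossing changes. To show $\HN$ is not of finite type, it suffices to produce, for arbitrarily large crossing sets, a nonvanishing finite difference. Concretely, I would let $\mc{C}_p$ denote the full set of $p$ crossings of $T(2,p)$ and compute the complete difference
\[ \HN_{\mc{C}_p}(T(2,p)) = \sum_{S \subseteq \mc{C}_p} (-1)^{|S|} \HN(T(2,p)_S) \]
using the closed-form expression for $I_S(D)$ from Definition \ref{def:vassiliev}. The key simplification is that, by Lemma \ref{lem:torus-knot-HN}, the summand $\HN(T(2,p)_S)$ depends on $S$ only through its cardinality $k = |S|$; the sum therefore collapses to a single-index sum weighted by binomial coefficients $\binom{p}{k}$.

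Next I would isolate a single basis vector whose coefficient cannot cancel. Each summand contributes to $X_{\frac{p-2k-1}{2}}$ and $Y_{\frac{p-2k+1}{2}}$, and the first subscript $\frac{p-2k-1}{2}$ is strictly decreasing in $k$. Hence $X_{\frac{p-1}{2}}$ is produced only by the term $k=0$, that is, by $S = \emptyset$. Since the $X$- and $Y$-basis vectors are distinct, no $Y$-term can interfere, and the coefficient of $X_{\frac{p-1}{2}}$ is exactly $(-1)^0 \binom{p}{0}(p-0) = p$. As $p \geq 1$, this coefficient is nonzero, so $\HN_{\mc{C}_p}(T(2,p)) \neq 0$.

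Finally, I would close the argument with the standard monotonicity of finite type encoded in Definition \ref{def:vassiliev}. If $\HN$ were Vassiliev of some finite order $n$, then $\HN_S(D)$ would vanish for every $S$ with $|S| = n+1$, and—applying the recursion $I_S(D) = I_{S\setminus\{c\}}(D) - I_{S\setminus\{c\}}(D_c)$ inductively—for every larger $S$ as well. Choosing any odd $p > n$ then yields $\HN_{\mc{C}_p}(T(2,p)) \neq 0$ with $|\mc{C}_p| = p > n$, a contradiction. Since no finite $n$ can work, $\HN$ is not a Vassiliev invariant.

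The main obstacle is purely the bookkeeping in the middle step: one must be certain that the chosen basis element receives a contribution from a \emph{unique} value of $k$, so that the binomial-weighted alternating sum does not silently cancel. This is precisely where the monotonicity of the subscript $\frac{p-2k-1}{2}$ in $k$ does the real work. Everything else is either a direct appeal to Lemma \ref{lem:torus-knot-HN} or a formal property of finite differences, so I expect no further difficulty.
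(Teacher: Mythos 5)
Your proposal is correct and follows essentially the same route as the paper's own proof: compute $\HN_{\mc{C}_p}(T(2,p))$ via Lemma \ref{lem:torus-knot-HN}, observe that $X_{\frac{p-1}{2}}$ can only arise from $S=\emptyset$ because the subscript $\frac{p-2|S|-1}{2}$ determines $|S|$, and conclude by contradiction for odd $p>n$. If anything, your write-up is slightly more careful than the paper's: you retain the $(-1)^{|S|}$ factor that the paper's displayed formula drops, and you make explicit the induction showing that vanishing at order $n+1$ forces vanishing for all larger crossing sets.
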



\subsection{Forward/backward, positive/negative and ascending/descending $\Omega3$ moves}
\label{subs:type-3-moves}
The relationship of forward/backward to positive/negative $\Omega3$ moves
is best understood using the notions of \emph{ascending} and \emph{descending}
$\Omega3$ moves introduced by \"Ostlund \cite{O01}:

\begin{definition} [ascending and descending $\Omega3$ moves]
    Follow the orientation of the knot diagram. 
    An $\Omega3$ move is \emph{ascending} if the three segments involved 
    are passed in the order bottom-middle-top, 
    and \emph{descending} if the three segments involved 
    are passed in the order top-middle-bottom.
    \qet
\end{definition}

\begin{remark}
    The ascending/descending classification of a move 
    does not change when reversing the move.
    For instance, if we consider diagrams in Figure \ref{fig:R3aAD},
    it does not matter if we go \emph{from left to right}
    or \emph{from right to left}.
    On the contrary, reversing a~move changes its forward/backward
    or positive/negative classification.
\end{remark}

\begin{proposition}\label{thm:R3-classification-relation}
    An~ascending $\Omega3$ move is forward if and only if it is positive.
    A~descending $\Omega3$ move is forward if and only if it is negative.
    \begin{proof}
        The eight cases of $\Omega3$ moves can be placed into two groups 
        based on the bottom-middle-top orientation of the vanishing triangle. 
        In cases $a, d, e,$ and $g$ it is clockwise, 
        and in cases $b, c, f,$ and $h$ it is counterclockwise. 
        We will prove Proposition \ref{thm:R3-classification-relation} for 
        $\Omega3a$ and $\Omega3b$ moves, 
        as others are dealt with similarly.

        \begin{figure}[ht]
            \centering
            \includegraphics[width=1\textwidth]{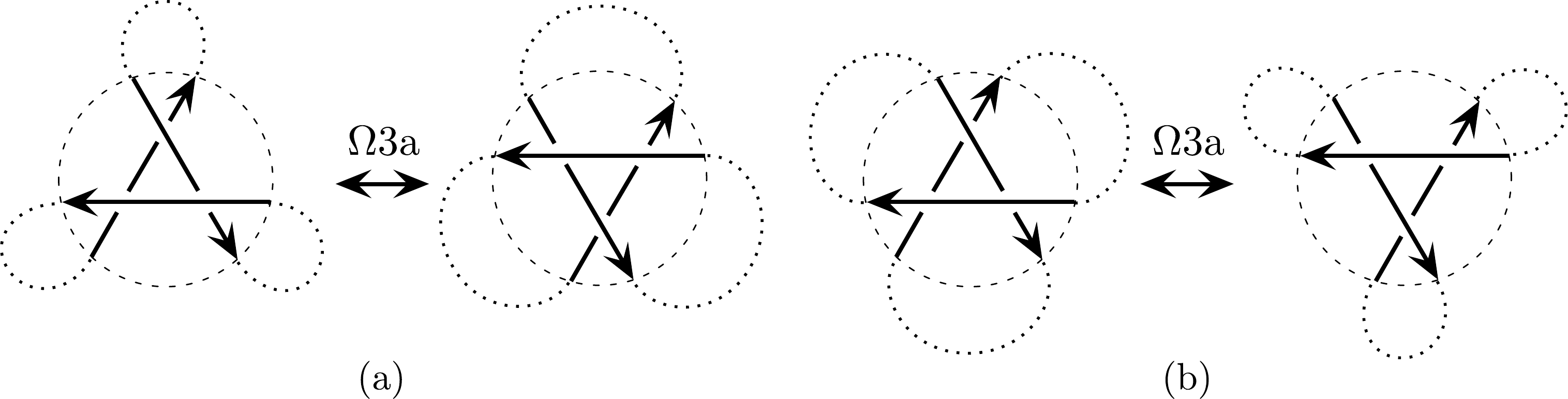}
            \caption{Diagrams for forward (a) ascending and (b) descending $\Omega3a$ moves.}
            \label{fig:R3aAD}
        \end{figure}

        For an~ascending $\Omega3a$ move, of the three strands involved in the move, 
        the bottom strand connects to the middle strand, 
        the middle to the top, and the top to the bottom (see Figure \ref{fig:R3aAD}).
        The order-of-appearance orientation of the vanishing triangle, 
        introduced in Definition \ref{def:R3-positive}, is clockwise,
        which agrees with orientations of all three strands.
        This makes $q' = -1$ for this diagram. 
        Once the $\Omega3$ move is made, 
        the order-of-appearance orientation remains clockwise, 
        but it now disagrees with orientations of all three strands, so $q' = +1$. 
        It follows that a forward ascending $\Omega3a$ move is positive.

        For a~descending $\Omega3a$ moves, 
        of the three strands involved in the move, 
        the bottom strand connects to the top strand, 
        the top to the middle, and the middle to the bottom (see Figure \ref{fig:R3aAD}).
        The order-of-appearance orientation of the vanishing triangle is counterclockwise. 
        This makes $q' = +1$ for this diagram. 
        Once the $\Omega3$ move is made, $q'$ becomes $-1$. 
        Thus, a forward descending $\Omega3a$ move is negative.

        \begin{figure}[ht]
            \centering
            \includegraphics[width=1\textwidth]{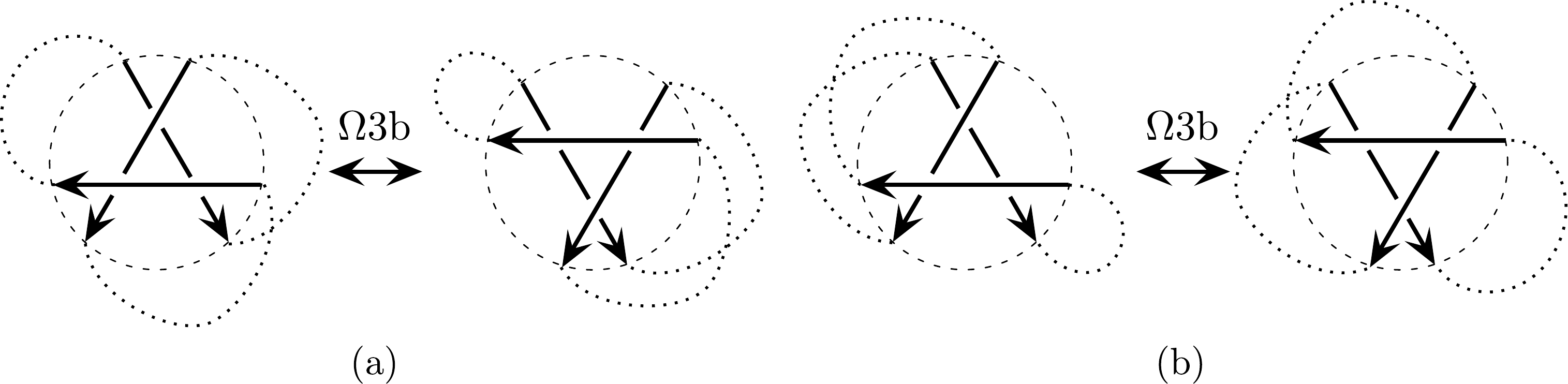}
            \caption{Diagrams for forward (a) ascending and (b) descending $\Omega3b$ moves.}
            \label{fig:R3bAD}
        \end{figure}

        For an~ascending $\Omega3b$ move, of the three strands involved in the move, 
        the bottom strand connects to the middle strand, the middle to the top, 
        and the top to the bottom (see Figure \ref{fig:R3bAD}).
        The order-of-appearance orientation of the vanishing triangle is counterclockwise. 
        This makes $q' = -1$ for this diagram. 
        Once the $\Omega3$ move is made, $q'$ becomes $+1$. 
        Thus, a forward ascending $\Omega3b$ move is positive.

        For a~descending $\Omega3b$ move, the order-of-appearance orientation is opposite
        (as it was in the case of $\Omega3a$ move),
        so a forward descending $\Omega3b$ move is negative.
    \end{proof}
\end{proposition}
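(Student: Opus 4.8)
The plan is to verify the biconditional directly, after first isolating precisely what data each of the three dichotomies actually sees. The positive/negative classification (Definition \ref{def:R3-positive}) is a property of the \emph{underlying curve} alone: it depends only on the orientations of the three strands and on the cyclic order in which the three sides of the vanishing triangle are traversed along $C$. The ascending/descending classification, by contrast, records the \emph{height} data of the three strands, i.e.\ which strand connects to which, and forward/backward records the \emph{direction} of the move (left-to-right in Figure \ref{fig:R3-cases}). All three dichotomies are determined by local data in a neighborhood of the three strands, so it suffices to check the claim on the finite list of local models, namely the eight types $\Omega3a,\dots,\Omega3h$.

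Next I would cut down the casework. Sorting the eight diagrams by the planar orientation (clockwise or counterclockwise) of the vanishing triangle splits them into two groups of four, and within each group the remaining diagrams are obtained by permuting and reorienting the three strands, so the same bookkeeping applies verbatim. This reduces the problem to two representatives, one from each group, say $\Omega3a$ and $\Omega3b$, with the other six cases following by symmetry.

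For a representative case I would then compute. Fixing a diagram and an orientation, I read off the order-of-appearance orientation of the vanishing triangle, count the number $n$ of sides whose strand orientation agrees with it, and set $q=(-1)^n$; doing this on both sides of the move records the value of $q$ before and after, hence whether the forward move raises $q$ from $-1$ to $+1$ (positive) or lowers it (negative). The observation that links the two classifications is that the ascending/descending structure \emph{is} the strand connectivity (bottom-to-middle-to-top versus top-to-middle-to-bottom), and this connectivity is exactly what fixes the cyclic order of the three sides along $C$, hence the order-of-appearance orientation; so exchanging ascending $\leftrightarrow$ descending flips that orientation and thereby reverses the direction in which $q$ changes. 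Matching this against the forward direction then yields the two equivalences claimed.

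The main obstacle is the bookkeeping across the move itself. The vanishing triangle before the move and the one after are different triangles, since the middle strand has swept across the opposite vertex, so one must argue carefully that the order-of-appearance orientation is preserved while its agreement with the strand orientations flips wholesale (for $\Omega3a$, from agreeing with all three sides to disagreeing with all three), which is precisely what sends $q$ from $-1$ to $+1$. Getting this transformation right, and confirming that the clockwise/counterclockwise grouping interacts with ascending/descending exactly as predicted, is where the real care is needed; once a single case is pinned down correctly, the rest are routine.
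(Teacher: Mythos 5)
Your proposal follows essentially the same route as the paper's own proof: the same grouping of the eight $\Omega3$ cases into two classes of four by the clockwise/counterclockwise orientation of the vanishing triangle, the same reduction to the representatives $\Omega3a$ and $\Omega3b$, and the same direct computation of $q$ before and after the move for the ascending and descending variants (including the key observation that the order-of-appearance orientation persists across the move while its agreement with the strand orientations flips). The only remaining work is to carry out the explicit bookkeeping you describe in the final paragraph, which is exactly what the paper does via its figures.
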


With the relationships between different kinds of $\Omega3$ moves sorted out,
we can precisely describe the behavior of $\HN$ under Reidemeister moves:
\begin{proposition}\label{prop:HN-changes}
    Under forward Reidemeister moves, 
    $\HN$ changes by:
    \begin{itemize}
        \item $X_0$ (resp. $Y_0$) under $\Omega1$ moves
            creating positive (resp. negative) crossing;
        \item $X_k + Y_{k+1}$ (for some $k \in \Z$) under $\Omega2m$ moves;
        \item $X_k + Y_k$ under $\Omega2u$ moves;
        \item $X_k - X_{k+1}$ (resp. $Y_{k+1} - Y_k$) under ascending (i.e. positive) 
            $\Omega3$ moves with positive (resp. negative) crossing between top and bottom strands;
        \item $X_{k+1} - X_k$ (resp. $Y_k - Y_{k+1}$) under descending (i.e. negative) 
            $\Omega3$ moves with positive (resp. negative) crossing between top and bottom strands;
    \end{itemize}
    \begin{proof}
        Changes of $\HN$ under Reidemeister moves are described section $2$ of \cite{HN08}.
        The only detail not determined there is the sign of changes under moves of type $\Omega3$,
        i.e. whether the change is equal to $X_k-X_{k+1}$ or $-X_k+X_{k+1}$
        (resp. $Y_k-Y_{k+1}$ or $-Y_k+Y_{k+1}$).
        This can be easily checked through casework 
        as demonstrated in Proposition 3.5 in \cite{S17} for ascending $\Omega3a$ moves.
    \end{proof}
\end{proposition}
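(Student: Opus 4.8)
The plan is to establish Proposition~\ref{prop:HN-changes} by reducing everything to explicit local computation on the diagram near the site of each Reidemeister move, using the definition of $\HN$ together with the classification results already in place. First I would recall that $\HN(D) = \sum_{c \in \mathcal{C}_+(D)} X_{\lk(D^c)} + \sum_{c \in \mathcal{C}_-(D)} Y_{\lk(D^c)}$, so that changing the diagram by a Reidemeister move affects $\HN$ in two ways: it creates or destroys crossings (changing which terms appear in the sum), and it may alter the linking numbers $\lk(D^c)$ for crossings $c$ away from the move. For $\Omega1$, $\Omega2m$ and $\Omega2u$ moves, the cited Section~2 of \cite{HN08} already supplies the full answer, and I would simply record those changes, noting that the bulk of the work for the $\Omega1$ and $\Omega2$ items requires no new argument.

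The substance of the proof lies in the $\Omega3$ items, where by the author's own admission in the proof sketch the only undetermined point is the \emph{sign} of the change---whether it is $X_k - X_{k+1}$ or $X_{k+1} - X_k$. I would proceed by fixing the local picture of a forward $\Omega3a$ move and tracking, for each of the three strands passing through the move, how the smoothed link $D^c$ and its linking number change as the triangle is swept across. Since a smoothing at any one of the three crossings separates the three strands into two components, $\lk(D^c)$ is computed as half the signed count of crossings between those components; I would compare these counts before and after the move for the crossing $c$ lying between the top and bottom strands, which is the crossing whose index shifts. The key linking-number bookkeeping is essentially identical to the $\St$- and $\SCI$-style index computations carried out earlier (cf.\ the proof of Theorem~\ref{def:SCI-alt}), where the index of a crossing changes by exactly $\pm 1$ under an $\Omega3$ move.

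To pin down the sign and correlate it with the ascending/descending character, I would invoke Proposition~\ref{thm:R3-classification-relation}, which identifies ascending with positive and descending with negative $\Omega3$ moves, and then read off from the oriented local picture (as in Figures~\ref{fig:R3aAD} and~\ref{fig:R3bAD}) whether the index rises or falls. The cleanest route, as the author indicates, is to defer the single nontrivial sign verification to the casework already performed in Proposition~3.5 of \cite{S17} for ascending $\Omega3a$ moves, and then argue that all remaining cases---the other seven configurations of $\Omega3$ moves, together with the negative-crossing variants producing $Y$ terms---follow by the same local analysis combined with the symmetry observations (orientation reversal, mirror symmetry) that group the eight cases into the two clockwise/counterclockwise families identified in Proposition~\ref{thm:R3-classification-relation}.

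The main obstacle, and genuinely the only place where care is needed, is the sign determination in the $\Omega3$ case: getting right whether a forward ascending move lowers or raises the relevant linking number, since an orientation or handedness slip flips $X_k - X_{k+1}$ into its negative. Everything else is either quoted from \cite{HN08} or reduced to the already-verified base case in \cite{S17}, so the proof is principally an exercise in organizing the casework and appealing to the previously established correspondence between the various $\Omega3$ classifications rather than performing new computation.
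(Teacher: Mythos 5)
Your proposal is correct and follows essentially the same route as the paper: both quote Section 2 of \cite{HN08} for the $\Omega1$ and $\Omega2$ items and for the general shape of the $\Omega3$ change, and both resolve the only remaining issue---the sign in the $\Omega3$ case---by local casework anchored in Proposition 3.5 of \cite{S17} for ascending $\Omega3a$ moves, using the forward/positive correspondence of Proposition \ref{thm:R3-classification-relation}. Your write-up merely spells out in more detail the linking-number bookkeeping that the paper leaves implicit in the phrase ``easily checked through casework.''
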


From $\HN$ we may also obtain \emph{cowrithe}. 
While $\SCI$ seems to closely resemble $\St$, 
the changes of $\SCI$ under $\Omega3$ moves depend on the forward/backward character
of the $\Omega3$ move,
and the changes of $\St$ depend on the positive/negative quality of the move.
Cowrithe resembles the behavior of $\St$ in that its change
under $\Omega3$ moves depends only the positive/negative type of the move
(since it is the case with $\HN$).

\begin{definition}[cowrithe, \cites{H06,HN08}] \label{def:cowrithe}
    Let $D$ be an oriented knot diagram. 
    Let $f\colon \GZ \to \Z$ be the homomorphism 
    defined by $f(X_n) = -n$ and $f(Y_n) = n$. 
    Then the cowrithe of $D$ is $f(\HN_{\lk})$.
\end{definition}

From Proposition \ref{prop:HN-changes} we obtain a~description of changes of cowrithe.
\begin{corollary} \label{thm:cowrithe-changes}
    Cowrithe does not change under $\Omega1$ and unmatched $\Omega2$ moves.
    It increases by $1$ under forward matched $\Omega2$ and positive $\Omega3$ moves.
\end{corollary}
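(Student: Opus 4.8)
The plan is to derive Corollary \ref{thm:cowrithe-changes} directly by applying the homomorphism $f$ to the list of changes of $\HN$ recorded in Proposition \ref{prop:HN-changes}. Since cowrithe is by definition $f(\HN)$ and $f$ is a group homomorphism, the change in cowrithe under any Reidemeister move is exactly $f$ applied to the change in $\HN$ under that move. Thus the entire argument reduces to evaluating $f$ on each of the five bullet points of Proposition \ref{prop:HN-changes}, using only the defining values $f(X_n) = -n$ and $f(Y_n) = n$.

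First I would handle the $\Omega1$ and unmatched $\Omega2$ moves. For $\Omega1$ the change is $X_0$ or $Y_0$, and $f(X_0) = f(Y_0) = 0$, so cowrithe is unchanged. For an $\Omega2u$ move the change is $X_k + Y_k$, and $f(X_k + Y_k) = -k + k = 0$, again giving no change. This accounts for the first sentence of the corollary. Next I would treat the matched $\Omega2$ and $\Omega3$ cases that produce a change of $+1$. For a forward matched $\Omega2$ move the change is $X_k + Y_{k+1}$, so $f(X_k + Y_{k+1}) = -k + (k+1) = 1$. For a positive (equivalently ascending) $\Omega3$ move with a positive crossing between top and bottom strands the change is $X_k - X_{k+1}$, giving $f(X_k - X_{k+1}) = -k + (k+1) = 1$; with a negative such crossing the change is $Y_{k+1} - Y_k$, giving $f(Y_{k+1} - Y_k) = (k+1) - k = 1$. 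In both sub-cases a positive $\Omega3$ move raises cowrithe by exactly $1$, as claimed.

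I would also verify consistency by checking the negative $\Omega3$ case, even though the corollary only asserts the forward/positive behavior: a descending (negative) $\Omega3$ move changes $\HN$ by $X_{k+1} - X_k$ or $Y_k - Y_{k+1}$, so $f$ of either equals $-1$, confirming that the positive/negative character is precisely what governs the sign of the change. This matches the narrative preceding the corollary, namely that cowrithe, like $\St$, depends only on the positive/negative type of the $\Omega3$ move rather than its forward/backward character.

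There is essentially no hard part here; the only point requiring care is bookkeeping, namely making sure each of the two sub-cases (positive versus negative crossing between the top and bottom strands) for matched $\Omega2$ and for $\Omega3$ moves is matched with the correct $\HN$-change from Proposition \ref{prop:HN-changes} before applying $f$. Because $f$ collapses the distinct basis elements $X_k, X_{k+1}, Y_k, Y_{k+1}$ onto integers differing by exactly one, all the index-dependent data in $\HN$ disappears and only the $\pm 1$ or $0$ survives, which is exactly the content of the corollary.
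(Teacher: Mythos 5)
Your proposal is correct and is exactly the paper's own (implicit) derivation: the paper presents the corollary as an immediate consequence of Proposition \ref{prop:HN-changes}, obtained by applying the homomorphism $f$ with $f(X_n)=-n$, $f(Y_n)=n$ to each listed change of $\HN$, which is precisely your computation. Your extra consistency check on descending (negative) $\Omega3$ moves, yielding $-1$, matches the table in the paper's appendix and correctly accounts for why positive $\Omega3$ moves (including backward descending ones) always contribute $+1$.
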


\newpage
\section{Appendix}
In Table \ref{tab:invariants} we summarize how some of the known knot diagram invariants
change under various types of \emph{forward} Reidemeister moves.

\begin{table}[ht]
\centering
\noindent\makebox[\textwidth]{
\begin{tabular}{|c|c|c|c|c|c|c|}
	\hline
    Invariant & $\Omega1$ & $\Omega1\mr{F}$ & $\Omega2m$ & $\Omega2u$ & $\Omega3^{asc}$ & $\Omega3^{desc}$ \\
	\hline
	$\writhe=$writhe & $\sgn(c) = \pm 1$ & 0 & 0 & 0 & 0 & 0 \\
	\hline
    $n=$number of crossings & +1 & +2 & +2 & +2 & 0 & 0 \\
	\hline
    winding number & $w(c) = \pm 1$ & $2 w(c)$ & 0 & 0 & 0 & 0 \\
	\hline
    SCI & $\sgn(c)\ind(c)$ & 0 & 0 & 0 & +1 & +1 \\
	\hline
    $\HN$ & $X_0$ if $\sgn(c) = +1$, & $X_0+Y_0$ & $X_k+Y_{k+1}$ & $X_k+Y_k$ & $X_k-X_{k+1}$, & $X_{k+1}-X_k$, \\
      & $Y_0$ if $\sgn(c) = -1$ & & & & $Y_{k+1}-Y_k$ & $Y_k-Y_{k+1}$ \\
	\hline
    $x=$cowrithe & 0 & 0 & +1 & 0 & +1 & -1 \\
	\hline
    
    $\St$ & $w(c) \ind(c)$ & $2 w(c) \ind(c)$ & 0 & 0 & +1 & -1 \\
    \hline
    $\Jp$ & $-2w(c)\ind(c)$ & $-4w(c)\ind(c)$ & +2 & 0 & 0 & 0 \\
	\hline
    ${\Jp}/2 + \St$ & 0 & 0 & +1 & 0 & +1 & -1 \\
	\hline
    $A_n$ & 0 & 0 & 0 & 0 & ? & 0 \\
    \hline
    $D_n$ & 0 & 0 & 0 & 0 & 0 & ? \\
    \hline
    $W_n$ & 0 & 0 & 0 & 0 & ? & ? \\
    \hline
    
\end{tabular}}
\caption{Knot diagram invariants and their changes under 
    forward Reidemeister moves for knots and framed knots.
$c$ denotes a~crossing created by the $\Omega1$ (or $\Omega1\mr{F}$) move,
and "?" denotes lack of a~combinatorial formula for the change.}
\label{tab:invariants}
\end{table}

Note that a~crossing $c$ created by a~move of type $\Omega1$ (or $\Omega1\mr{F}$)
has $w(c) = 1$ if it is created on the left side of the strand
and $w(c) = -1$ if it is created on the right side of it
($w(c)$ is the \emph{weight} of the crossing, see Definition \ref{def:weight}).

As observed by Hass and Nowik \cite{HN08},
the difference between cowrithe and $\Jp/2 + \St$ is equal to
$4 c_2$, where $c_2$ is the second coefficient of the Conway polynomial of a~knot.
The fact that this difference is a~knot invariant
is reflected in Table \ref{tab:invariants}.

The precise description of the entries for $\HN$ in the table above
belongs to Proposition \ref{prop:HN-changes} and mostly follows \cite{HN08},
and from these one computes the changes for cowrithe.
Results for $\St$ and $\Jp$ follow directly from Shumakovich's
(Theorem \ref{thm:explicit-St}) and Viro's (Theorem \ref{thm:explicit-Jpm}) formulas.
The invariants $A_n, D_n$ for $n = 4,5,6, \ldots$ and $W_n$ for $n=3,5,7,\ldots$
are described in \cite{O01}.
It is worth to note that there is a~combinatorial formula for the change of $W_3$
under moves of type $\Omega3$ (see \cite{O01}). 
While the change may be large, 
it is bounded by the number of the crossings of a~diagram.

\newpage
\begin{bibdiv}
    \begin{biblist}*{labels={numeric}}
        \bib{Arn94}{article}{
            author={Arnold, Vladimir I.},
            title={Plane curves, their invariants,
            perestroikas and classifications},
            conference={
                title={Singularities and bifurcations},
            },
            book={
                series={Adv. Soviet Math.},
                volume={21},
                publisher={Amer. Math. Soc.},
                address={Providence, RI},
            },
            date={1994},
            pages={33--91},
        } 
        \bib{CDM12}{book}{
            author={Chmutov, Sergei},
            author={Duzhin, Sergei},
            author={Mostovoy, Jacob},
            title={Introduction to Vassiliev knot invariants},
            publisher={Cambridge University Press, Cambridge},
            date={2012},
        }
        \bib{H06}{article}{
            author={Hayashi, Chuichiro},
            title={A lower bound for the number of Reidemeister moves for unknotting},
            journal={J. Knot Theory Ramifications},
            volume={15},
            date={2006},
            number={3},
            pages={313--325},
        }
        \bib{HHSY}{article}{
            author={Hayashi, Chuichiro},
            author={Hayashi, Miwa},
            author={Sawada, Minori},
            author={Yamada, Sayaka},
            title={Minimal unknotting sequences of Reidemeister moves containing unmatched RII moves},
            journal={J. Knot Theory Ramifications},
            volume={21},
            date={2012},
            number={10},
            pages={1250099, 13 pages},
        }
        \bib{HN08}{article}{
            author={Hass, Joel},
            author={Nowik, Tahl},
            title={Invariants of knot diagrams},
            journal={Math. Ann.},
            volume={342},
            number={1},
            date={2008},
            pages={125--137},
        }
        \bib{HN10}{article}{
            author={Hass, Joel},
            author={Nowik, Tahl},
            title={Unknot diagrams requiring a quadratic number of Reidemeister moves to untangle},
            journal={Discrete Comput. Geom.},
            volume={44},
            number={1},
            date={2010},
            pages={91--95},
        }
        \bib{L15}{article}{
            author={Lackenby, Marc},
            title={A polynomial upper bound on Reidemeister moves},
            journal={Ann. Math.},
            volume={182},
            date={2015},
            number={2},
            pages={491--564},
        }
        \bib{O01}{article}{
            author={{\"O}stlund, Olof-Petter},
            title={Invariants of knot diagrams and relations among Reidemeister moves},
            journal={J. Knot Theory Ramifications},
            volume={10},
            number={8},
            date={2001},
            pages={1215--1227},
        }
        \bib{Sh95}{article}{
            author={Shumakovich, Alexander},
            title={Explicit formulas for the strangeness of plane curves},
            language={Russian, with Russian summary},
            journal={Algebra i Analiz},
            volume={7},
            date={1995},
            number={3},
            pages={165--199},
            translation={
                journal={St. Petersburg Math. J.},
                volume={7},
                date={1996},
                number={3},
                pages={445-472},
            },
        }
        \bib{S17}{article}{
            author={Suwara, Piotr},
            title={Minimal generating sets of directed oriented Reidemeister moves},
            journal={J. Knot Theory Ramifications},
            volume={26},
            date={2017},
            number={4},
            pages={1750016, 20},
        }
        \bib{V94}{article}{
            author={Vassiliev, Victor A.},
            title={Invariants of ornaments},
            conference={
                title={Singularities and bifurcations},
            },
            book={
                series={Adv. Soviet Math.},
                volume={21},
                publisher={Amer. Math. Soc.},
                address={Providence, RI},
            },
            date={1994},
            pages={225--262},
        }
        \bib{Vi94}{book}{
            author={Viro, Oleg},
            title={First degree invariants of generic curves on surfaces},
            series={UUDM report},
            publisher={Department of Mathematics, Uppsala University},
            date={1994},
        }
    \end{biblist}
\end{bibdiv}

\end{document}